\theoremstyle{plain}% Theorem-like structures provided by amsthm.sty
\newtheorem{theorem}{Theorem}[section]
\theoremstyle{remark}
\newtheorem{remark}{Remark}
\algnewcommand{\IIf}[1]{\State\algorithmicif\ #1\ \algorithmicthen}
\algnewcommand{\EndIIf}{\unskip\ \algorithmicend\ \algorithmicif}
\DeclareMathOperator{\diag}{diag}
\DeclareMathOperator*{\mini}{min.}
\DeclareMathOperator*{\lexmin}{lexmin.}
\definecolor{light-gray}{gray}{0.95}
\definecolor{dark-gray}{gray}{0.5}
\definecolor{mygray}{gray}{0.75}
\newcommand{\BIN}{\begin{bmatrix}}
\newcommand{\BOUT}{\end{bmatrix}}
\newcommand{\umA}{\underline{\mathcal{A}}}
\newcommand{\mI}{{\mathcal{I}}}
\newcommand{\bE}{{\mathbb{E}}}
\newcommand{\bI}{{\mathbb{I}}}
\newcommand{\mA}{{\mathcal{A}}}
\definecolor{orange}{rgb}{0.99,0.69,0.07}
\definecolor{lightgray}{gray}{0.85}
\definecolor{light-gray}{gray}{0.95}
\definecolor{dark-gray}{gray}{0.5}
 \newcommand\fs@spaceruled{\def\@fs@cfont{\bfseries}\let\@fs@capt\floatc@ruled
   \def\@fs@pre{\vspace{5pt}\hrule height.8pt depth0pt \kern2pt}%
   \def\@fs@post{\kern2pt\hrule\relax}%
   \def\@fs@mid{\kern2pt\hrule\kern2pt}%
   \let\@fs@iftopcapt\iftrue}
\newcommand{\parm}{\mathord{\color{black!33}\bullet}}%
\title{Sequential Hierarchical Least-Squares Programming for Prioritized Non-Linear Optimal Control}
\begin{document}

\articletype{}

\author{
\name{Kai Pfeiffer\textsuperscript{a}\thanks{CONTACT Kai Pfeiffer. Email: kaipfeifferrobotics@gmail.com} and Abderrahmane Kheddar\textsuperscript{b,c}}
\affil{
\textsuperscript{a}School of Mechanical and Aerospace Engineering, {Nanyang Technological University}, {{639798 Singapore}; 
\textsuperscript{b}Joint Robotics Laboratory (JRL)}, {UMI3218/RL}, {Tsukuba 305-8560, Japan}; 
\textsuperscript{c}Interactive Digital Human, University of Montpellier, CNRS,	LIRMM, UMR5506, 34095 Montpellier; 
}
}

\maketitle

\begin{abstract}
We present a sequential hierarchical least-squares programming solver with trust-region and hierarchical step-filter with application to prioritized discrete non-linear optimal control. It is based on a hierarchical step-filter which resolves each priority level of a non-linear hierarchical least-squares programming via a globally convergent sequential quadratic programming step-filter. Leveraging a condition on the trust-region or the filter initialization, our hierarchical step-filter maintains this global convergence property. 
The hierarchical least-squares programming sub-problems are solved via a sparse reduced Hessian based interior point method. It leverages an efficient implementation of the turnback algorithm for the computation of nullspace bases for banded matrices. We propose a nullspace trust region adaptation method embedded within the sub-problem solver towards a comprehensive hierarchical step-filter.
We demonstrate the computational efficiency of the hierarchical solver on typical test functions like the Rosenbrock and Himmelblau's functions, inverse kinematics problems and prioritized discrete non-linear optimal control.   
\end{abstract}

\begin{keywords}
Numerical optimization; lexicographical optimization; multi objective optimization; hierarchical non-linear least-squares programming; filter methods; discrete optimal control; sparse nullspace;
\end{keywords}

\begin{amscode}
90C29, 65F50, 49M15, 49M37, 90C55
\end{amscode}

\section{Introduction} 

\subsection{Context and contribution}
Lexicographic multi-objective optimization (LMOO) is the hierarchical stacking of objectives, where each objective is infinitely more important than the succeeding ones. It has its early beginnings in linear programming~\citep{Sherali1983}. 
LMOO has been extended to non-linear objective functions~\citep{Evtushenko2014,Lai2021,abu2023} and has been applied for example to road pricing~\citep{Zhong2022}. 
In recent years, lexicographic optimization for linear least-squares programming has received considerable attention, especially in the robot control community. 
The authors in~\citep{Kanoun2009} first enabled the handling of infeasible inequality constraints on any priority level. 
This is in contrast to LMOO which only considers inequality constraints on the state vector~\citep{Lai2021}. 
The work in~\citep{escande2014} proposed a very efficient solver for resolving these hierarchical least-squares programs (HLSP) based on a reduced Hessian formulation~\citep{nocedal2006}. 
In this work we aim to contextualize HLSP's within hierarchical non-linear programming~\eqref{eq:HNLP}, and specifically non-linear hierarchical least-squares programming~\ref{eq:NL-HLSP}. 
We propose a sequential hierarchical least-squares programming solver (S-HLSP) with trust region (TR) and hierarchical step-filter (HSF) with global convergence properties (Sec.~\ref{sec:shlsp}).
 The algorithm is based on the sparse HLSP sub-problem solver s-$\mathcal{N}$IPM-HLSP tailored to discrete optimal control (Sec.~\ref{sec:turnback} and Alg.~\ref{alg:solvehlsp}). Unlike LMOO, we are able to handle infeasible inequality constraints on any priority level.

% 
% While hierarchical programming has been mostly applied to real-time robot control with this article we want to make a step towards contextualizing hierarchical programming within constrained non-linear programming (NLP). 
%Here, prioritization enables very safe and efficient robot control, separating safety constraints from governing system dynamics and control objectives and separation therein.
%The biggest advantage of hierarchical programming however may be separated variable regularization (a hierarchy with three levels $p=3$ with the regularization on level $l=3$) which avoids interference with the \textit{objective} on level $l=2$. Level $l=1$ is commonly referred to as \textit{constraint} level (note that in hierarchical programming this notation is softened since every priority level except the first and last one can be considered both a constraint and an objective at the same time). 

\subsection{Non-linear programming}

Constrained non-linear programs (NLP) can be solved
by including a  penalty term on the constraints in the objective function. The resulting unconstrained problem can be resolved by the Newton's method~\citep{nocedal2006}. Different forms of the penalty term have emerged. The quadratic penalty method~\citep{courant1960} adds a quadratic term. Augmented Lagrangian methods~\citep{hestenes1969} include a more refined term by maintaining an estimate of the Lagrange multipliers associated with the constraints. Another penalty function can be considered the log-barrier function~\citep{gould1989} which penalizes constraints when they move towards the boundaries of the feasible domain. This penalty term is commonly used in the primal-dual interior-point methods (IPM)~\citep{ipopt}.

Another approach to solve NLP's is
sequential quadratic programming (SQP)~\citep{CHIANG1994,gill2005}.	Here, the non-linear optimality conditions are linearized. The linearized \textit{sub-problem} is iteratively solved until a KKT point is reached. 
Second order optimality conditions require the Hessian to be positive definite, for example by regularization~\citep{Higham1986}. A positive definite approximation can be obtained for example by the BFGS algorithm~\citep{Broyden1970}. SQP has been proposed in connection with a trust region constraint~\citep{Sun2019} which limits the magnitude (with respect to some norm) of the linearized step. This ensures that the approximated quadratic model represents the original problem sufficiently well within this region. 
Merit functions can be used in order to guide the algorithm to optima in case of non-convex optimization~\citep{han1975}. They balance sufficient reduction of the objective and feasibility of the constraints in order to approach an optimal point of the original non-linear problem. One disadvantage of this approach is the need for choosing penalty parameters. Filter methods circumvent this by maintaining a list of accepted filter points with respect to some constraint feasibility and sufficient objective reduction criterion~\citep{fletcher2002b}. A filter method with global convergence properties for the interior point method has been proposed in~\citep{ulbrich2004}.

\subsection{Sequential hierarchical least-squares programming}
S-HLSP (as a sub-form of SQP) for solving NL-HLSP's has been applied in numerous works, especially concerning real-time robot control (high frequency control with limited computation time for each control cycle)~\citep{pfeiffer2023}. The authors in~\cite{Kanoun2009} proposed to solve a cascade of least-squares programs (LSP) to solve the arising HLSP sub-problems. The resulting linear step of these~\ref{eq:hlsp} sub-problems is used to make an approximated step in the original non-linear problem. The authors in~\citep{escande2014} provided a very efficient solver for the~\ref{eq:hlsp} sub-problems based on the reduced Hessian formulation. It solves progressively smaller (and therefore cheaper) problems as it progresses through the hierarchy by variable elimination resulting from nullspace projections. This is one of the fundamental concepts in efficient hierarchical programming. While this solver is based on the active-set method, a solver based on the interior-point method has been proposed~\citep{pfeiffer2021}, which provides higher levels of algorithm stability for ill-conditioned problems. 

\subsection{Our contribution to sequential hierarchical least-squares programming}

Two issues can be identified with the current state-of-the-art of S-HLSP:
\begin{itemize}
	\item Specific application to real-time robot control without consideration of convergence requirements.
	\item The~\ref{eq:hlsp} sub-problem solvers are not adapted to sparse problem formulations.
\end{itemize}

The first point was addressed in~\cite{pfeiffer2018,pfeiffer2023} by contextualizing S-HLSP's within NLP, for example by proposing a real-time suitable trust-region adaptation method (that avoids expensive re-calculations of the~\ref{eq:hlsp} sub-problems) and by formulating the hierarchical Newton's method. However, the convergence properties were not investigated. In this work, we propose a S-HLSP with trust-region and HSF with global convergence properties (see Sec.~\ref{sec:shlsp}).

The second point becomes relevant when solving discrete optimal control problems. Here, the sub-problems exhibit a banded structure which, when properly exploited, only leads to a linear instead of cubic increase of the computational complexity with the length of the control horizon~\citep{wangboyd2010}. 
A block-wise Quasi-Newton method for banded Jacobian updates in discrete non-linear optimal control has been proposed for example in~\citep{Hespanhol2021}.
Sparsity preserving nullspace bases for reduced Hessian based solvers have been proposed in numerous works~\citep{topcu1979,Yang2019,pfeiffer2021b}.
Here, we propose the sparse HLSP solver s-$\mathcal{N}$IPM-HLSP based on the IPM and a computationally efficient implementation of the turnback algorithm (see Sec.~\ref{sec:turnback}).
The turnback algorithm has been first conceived in~\citep{topcu1979} and has been addressed in numerous work, for example with regards to computational efficiency~\citep{berry1985}, preserving higher levels of sparsity~\citep{Gilbert1987} or in the context of embedded control~\citep{dang2017}. 

Our developments are tested on typical test functions like the Rosenbrock and Himmelblau's functions, inverse kinematics problems and prioritized discrete optimal control, see Sec.~\ref{sec:eval}.   

\nomenclature[01]{Outer iteration}{(or just iteration, iter.) S-HLSP iteration $k$}
\nomenclature[02]{Inner iter.}{Iter. $\iota$ of HLSP sub-problem solver}
\nomenclature[03]{Control iter.}{Collocation point along the horizon of discrete optimal control problems}

\nomenclature[04]{\(l\)}{Current priority level}
\nomenclature[05]{\(p\)}{Overall number of priority levels, excluding the trust region constraint on $l=0$}	
\nomenclature[06]{\(n\)}{Number of variables}
\nomenclature[07]{\(m\)}{Number of constraints}

\nomenclature[09]{\(x\in\mathbb{R}^{n}\)}{Primal of S-HLSP}
\nomenclature[10]{\(\Delta x\in\mathbb{R}^{n}\)}{Primal step of HLSP sub-problem}
\nomenclature[11]{\(\Delta x^l\in\mathbb{R}^{n}\)}{Primal step of level $l$ of HLSP sub-problem}
\nomenclature[12]{\(\Delta x_l\in\mathbb{R}^{n}\)}{Sum of primal steps of levels $1$ to $l$ of HLSP sub-problem}
\nomenclature[13]{\(\Delta z^l\)}{Primal nullspace step of level $l$ of HLSP sub-problem}
\nomenclature[14]{\(\Delta \hat{x}_{\iota}\)}{Sub-step of inner iter. ${\iota}$ of HLSP sub-problem}

\nomenclature[15]{\(f(x)\in\mathbb{R}^{m}\)}{(Non-linear) task function}
\nomenclature[17]{\(J(x)\in\mathbb{R}^{m\times n}\)}{Task function Jacobian}
\nomenclature[18]{\(H(x)\in\mathbb{R}^{n\times n}\)}{Task function Hessian}	
\nomenclature[20]{\(\hat{H}_l\in\mathbb{R}^{n\times n}\)}{Hierarchical Hessian of level $l$}
\nomenclature[21]{\(\hat{R}_l\in\mathbb{R}^{n\times n}\)}{Factor of hierarchical Hessian of level $l$}

\nomenclature[22]{\(\mathbb{E}_l\)}{ Set of $m_{\mathbb{E}}$ equality constraints (eq.)   of level $l$}
\nomenclature[23]{\(\mathbb{I}_l\)}{ Set of $m_{\mathbb{I}}$ inequality constraints (eq.)  of level $l$}
\nomenclature[24]{\(\mathcal{I}_l\)}{ Set of $m_{\mathcal{I}}$ inactive inequality constraints (ineq.)  of level $l$ }
\nomenclature[25]{\(\mathcal{A}_l\)}{ Set of $m_{\mathcal{A}}$ active equality and inequality constraints of level $l$}
\nomenclature[26]{\(\mathbb{E}_{\cup l}\)}{ Set union $\mathbb{E}_{\cup l}\coloneqq\bigcup_{i=1}^l \mathbb{E}_i= \mathbb{E}_1 \cup \cdots \cup \mathbb{E}_l$ with $m_{{\mathbb{E}}_{\cup l}}$ constraints}

\nomenclature[30]{\(v_l\in\mathbb{R}^{m_l}\)}{Slack variable of (non-linear) task functions $f_l$ on level $l$ }
\nomenclature[31]{\(v^*_l\in\mathbb{R}^{m_l}\)}{Optimal slack variable of level $l$}

\nomenclature[32]{\(f_l^+\)}{Violation of active constraints on level $l$}
\nomenclature[33]{\(h_{\cup l}(x)\in\mathbb{R}^{m_{\cup l}}\)}{Measure of constraint infeasibility from levels $1$ to $l$}

\nomenclature[34]{\(\mathcal{L}\)}{Lagrangian}
\nomenclature[35]{\(\lambda_{\mathbb{E}}\in\mathbb{R}^{m_{\mathbb{E}}}\)}{Lagrange multipliers with respect to the constraint set $\mathbb{E}$}
\nomenclature[36]{\(\lambda_{l}\in\mathbb{R}^{m_{\cup l-1}}\)}{Lagrange multipliers of level $l$ with respect to levels 1 to $l-1$}
\nomenclature[37]{\(\lambda_{\cup p}\in\mathbb{R}^{m_{\cup p}\times p}\)}{Lagrange multipliers of all levels}

\nomenclature[40]{\(A_{\mathbb{E}}\in\mathbb{R}^{m_{\mathbb{E}}\times n}\)}{ Matrix  representing a set $\mathbb{E}$  of $m_{\mathbb{E}}$ linear constraints }
\nomenclature[41]{\(b_{\mathbb{E}}\in\mathbb{R}^{m_{\mathbb{E}}}\)}{ Vector representing a set $\mathbb{E}$ of $m_{\mathbb{E}}$ linear constraints }
\nomenclature[42]{\(A_{i:j}\)}{Matrix col. range from col. $i$ to and incl. $j$}
\nomenclature[43]{\(b_{i}\) or \(b_{i:j}\)}{Vector entry $i$ or range $i$ to and incl. $j$}
\nomenclature[44]{\(Z\)}{Nullspace basis of matrix $A$ with $AZ=0$}
\nomenclature[45]{\(N_l\)}{Accumulated nullspace basis $N_l = Z_1 \dots Z_l$}

\nomenclature[46]{\(\hat{\parm}\)}{HLSP value equivalent $\hat{\parm}$ of NL-HLSP value $\parm$}

\nomenclature[50]{\(\hat{w}\in\mathbb{R}^{m}\)}{Interior point slack variable of level}
\nomenclature[51]{\(\mu\)}{Interior point duality measure}
\nomenclature[52]{\(\sigma\)}{Interior point centering parameter}

\nomenclature[60]{\(\rho\)}{Trust region radius}
\nomenclature[61]{\(\rho_{\max,l}\)}{Optimal trust region radius of level $l$}
\nomenclature[62]{\(\epsilon\)}{Hessian augmentation threshold}
\nomenclature[63]{\(\chi\)}{Step threshold on $\Delta x$ or $\Delta x_l$}
\nomenclature[64]{\(\xi\)}{Constraint activation threshold}
\nomenclature[65]{\(nnz(M)\)}{Number of non-zeros of matrix $M$}
\nomenclature[66]{\(d(M)\)}{Density $nnz/(mn)$ of matrix $M\in\mathbb{R}^{m\times n}$ }

\printnomenclature[3cm]

\section{Problem definition and background}

\subsection{Hierarchical non-linear programming as hierarchical non-linear least-squares programming}

In this work, we consider hierarchical non-linear programs (H-NLP) with $p$ levels of the form ($\lexmin$: lexicographically minimize)
	\begin{align}
	\lexmin_x\qquad &f_{\mathbb{E}_1}(x), \dots , f_{\mathbb{E}_p}(x)
	\label{eq:HNLP}\tag{H-NLP}\\
	\text{s.t}\qquad& f_{\mathbb{I}_{\cup p}}(x) \leq 0\nonumber
\end{align}
$x\in\mathbb{R}^n$ is a variable vector. 
On each level $l$, the \textit{objective} (or equality constraint $\mathbb{E}$) $f_{\mathbb{E}_l}\in\mathbb{R}^{m_{\mathcal{E}_l}}$ needs to be minimized while maintaining the optimality of the infinitely more important \textit{constraints} $f_{\mathbb{E}_{\cup l-1}}\in\mathbb{R}^{m_{\mathbb{E}_{\cup l-1}}}$ of levels $1$ to $l-1$. $\mathbb{E}_{\cup l-1}$ indicates the set union $\mathbb{E}_{\cup l-1} \coloneqq \bigcup_{i=1}^{l-1}\mathbb{E}_i = \mathbb{E}_1 \cup \cdots \cup \mathbb{E}_{l-1}$ of the sets of equality constraints of levels 1 to $l-1$. In the meantime, inequality constraints $f_{\mathbb{I}_{\cup p}}\in\mathbb{R}^{m_{\mathbb{I}_{\cup p}}}$  need to be respected. 
This is an extension to LMOO since in classical LMOO inequality constraints are limited to the state vector~\citep{Lai2021}.
Note that in hierarchical programming the notation of \textit{objective} (on a level $l$) and \textit{constraint} (on levels $1$ to $l-1$) is softened since every priority level except the first and last one can be considered both a constraint and an objective level at the same time. In the following, we omit the dependency of $x$ for better readability.

The above programming only holds when an $x$ exists such that the inequality constraints $f_{\mathbb{I}_{\cup p}}(x)\leq 0$ are fulfilled / feasible / compatible~\citep{fletcher2002b}. 
In order to relax the problem, we solve the~\ref{eq:HNLP} with respect to the $\ell_2$-norm by introducing slack variables $v$~\citep{pfeiffer2023}. This effectively casts the hierarchical non-linear programming into a hierarchical non-linear least-squares programming (NL-HLSP). The resulting NL-HLSP is given as follows:
\begin{align}
	\lexmin_{x,v_{\mathbb{E}_{1}},\dots,v_{\mathbb{I}_{p}}}\qquad &  \frac{1}{2}\Vert v_{\mathbb{E}_{1}}\Vert^2_2 +  \frac{1}{2}\Vert v_{\mathbb{I}_{1}}\Vert^2_2, \dots ,  \frac{1}{2}\Vert v_{\mathbb{E}_{p}}\Vert^2_2 +  \frac{1}{2}\Vert v_{\mathbb{I}_{p}}\Vert^2_2 \label{eq:nlhlsplexmin}\\
	\text{s.t}\qquad& f_{\mathbb{E}_{\cup p}} = v_{\mathbb{E}_{\cup p}}\nonumber\\
		\qquad& f_{\mathbb{I}_{\cup p}} \leq v_{\mathbb{I}_{\cup p}}\nonumber
\end{align}
This formulation with slacks has the advantage that infeasible inequality constraints can be handled on any priority level~\citep{Kanoun2009}. This also discards the need for a restoration phase~\citep{Choong2007} or feasibility refinement~\citep{Geffken2017} as it is common in SQP algorithms. 

%\footnotesize
\begin{remark}
	A drawback of the~\ref{eq:NL-HLSP} formulation is that its minima are not necessarily minima of the original~\ref{eq:HNLP}. An example would be minimization in the $\ell_1$-norm for sparse programming (basis pursuit, Lasso)~\citep{Candes2007}, which is not equivalently solvable by least-squares programming. However, in our primary use-case of robot planning and control the least-squares formulation is usually sufficient as has been demonstrated in numerous engineering works (\cite{pfeiffer2023} and references therein).
\end{remark}
\normalsize

Throughout the remainder of this article, we consider the following optimization problem which is sequentially solved for each priority level $l=1,\dots,p$, and which is equivalent to~\eqref{eq:nlhlsplexmin}:
\begin{align}
	\mini_{{x},v_{\mathbb{E}_{l}},v_{\mathbb{I}_{l}}} \quad & \frac{1}{2} \left\|v_{\mathbb{E}_{l}}\right\|^2_2 + \frac{1}{2} \left\|v_{\mathbb{I}_{l}}\right\|^2_2 \qquad\quad\hspace{8pt}l = 1,\dots,p
	\label{eq:NL-HLSP}\tag{NL-HLSP}\\
	\mbox{s.t.} \quad & f_{\mathbb{E}_{l}} = v_{\mathbb{E}_{l}}\nonumber\\
	\qquad& f_{\mathbb{I}_{l}} \leq v_{\mathbb{I}_l}\nonumber\\
	\qquad& f_{\mA_{\cup l-1}} = v_{\mA_{\cup l-1}}^*\nonumber\\
\qquad& f_{\mI_{\cup l-1}} \leq 0\nonumber
\end{align}
The slack variables $v_{\mA_{\cup l-1}}^*$ are the optimal ones identified for the higher priority levels $1$ to $l-1$. The \textit{active set} $\mA_{\cup l-1}$ contains all constraints that were active at convergence of levels $1$ to $l-1$. This includes all equality constraints $\bE_{\cup_{l-1}}$, and furthermore all violated / infeasible or saturated inequality constraints of $\bI_l$. In a similar vein, the \textit{inactive set} $\mI_{\cup l-1}$ contains all the remaining feasible inequality constraints of the sets $\bI_{\cup l-1}$.

This way of resolving a~\ref{eq:HNLP} / LMOO is commonly referred to as
\textit{preemptive} scheme. This is in contrast to the nonpreemptive scheme where each objective is weighted against each other with finite weights~\citep{Cococcioni2018}.

\subsection{Sequential hierarchical least-squares programming}

The Lagrangian $\mathcal{L}_l$ of level $l$ of the~\ref{eq:NL-HLSP} writes as
\begin{align}
	\mathcal{L}_l(x,v_l,\lambda_l) = \frac{1}{2}v_{\mathbb{E}_{l}}^Tv_{\mathbb{E}_{l}} &+ \frac{1}{2}v_{\mathbb{I}_{l}}^Tv_{\mathbb{I}_{l}} + {\lambda}_{\mathbb{E}_{l}}^T(f_{\mathbb{E}_{l}} - v_{\mathbb{E}_{l}}) + {\lambda}_{\mathbb{I}_{l}}^T(f_{\mathbb{I}_{l}} - v_{\mathbb{I}_{l}}) 	\label{eq:lagrangianGenHier}\\
	&+ {\lambda}_{\mA_{\cup l-1}}^T({f}_{\mA_{\cup l-1}} - {v}_{\mA_{\cup l-1}}^* ) + {\lambda}_{\mI_{\cup l-1}}^T {f}_{\mI_{\cup l-1}} \nonumber
\end{align}
${\lambda}_{\mathbb{E}_{l}}$, ${\lambda}_{\mathbb{I}_{l}}$, ${\lambda}_{\mA_{\cup l-1}}$ and ${\lambda}_{\mI_{\cup l-1}}$ are the Lagrange multipliers associated with the constraint sets $\mathbb{E}_{l}$, $\mathbb{I}_{l}$, $\mA_{\cup l-1}$ and $\mI_{\cup l-1}$, respectively. In the following, we use $\lambda_l$ interchangeably for the above and only specify the constraint set if necessary. Furthermore, the Lagrange multipliers $\lambda_{\cup p}\in\mathbb{R}^{m_{\cup p},p}$ contain the Lagrange multipliers $\lambda_l$ of all levels $l=1,\dots,p$. Similarly, $v_l$ represents $v_{\mathbb{E}_{l}}$ and $v_{\mathbb{I}_{l}}$.
We apply the hierarchical Newton's~\citep{pfeiffer2023} or Quasi-Newton method~\citep{pfeiffer2018} or Gauss-Newton algorithm to the (non-linear) first-order optimality conditions $\nabla \mathcal{L}_l(x,v_l,\lambda_l)=0$ of the~\ref{eq:NL-HLSP}. 
 The linearized problem writes as
\begin{align}
	\mini_{\Delta x,\hat{v}_{\mathbb{E}_l},\hat{v}_{\bI_l}}& \qquad \frac{1}{2}\Vert \hat{v}_{\mathbb{E}_l} \Vert^2_2 + \frac{1}{2}\Vert \hat{v}_{\bI_l} \Vert^2_2 \label{eq:hlsp}\tag{HLSP}\\
	\text{s.t.}
	& \qquad A_{\mathbb{E}_l}\Delta x - b_{\mathbb{E}_l} = \hat{v}_{\mathbb{E}_l}  \qquad l=0,\dots,p \nonumber\\
	& \qquad A_{\bI_l}\Delta x - b_{\bI_l} \leq \hat{v}_{\bI_l}\nonumber\\
	& \qquad A_{\mA_{\cup l-1}}\Delta x - b_{\mA_{\cup l-1}} = \hat{v}_{\mA_{\cup l-1}}^*\nonumber\\
	& \qquad A_{\mI_{\cup l-1}}\Delta x - b_{\mI_{\cup_{l-1}}} \leq 0\nonumber
\end{align}
$\hat{v}_l$ indicates that the slacks (and similarly the Lagrange multiplier estimate $\hat{\lambda}$) of the linearized problem do not necessarily coincide with the ones from the original one~\eqref{eq:NL-HLSP} except at S-HLSP convergence~\citep{fletcher2002b}.
The constraint matrices represent the linearizations of $f$ at a given $x$ and are composed as follows:
\begin{align}
	A_{\mathbb{E}_l} \coloneqq \BIN J_{\mathbb{E}_l} \\ R_l \BOUT \qquad &\text{and} \qquad b_{\mathbb{E}_l} \coloneqq \BIN f_{\mathbb{E}_l} \\ 0 \BOUT\\
	A_{\mathbb{I}_l} \coloneqq J_{\mathbb{I}_l}  \qquad &\text{and} \qquad b_{\mathbb{I}_l} \coloneqq f_{\mathbb{E}_l} \\
	A_{\mA_{\cup_{l-1}}} \coloneqq \BIN J_{\mathcal{A}_{l-1}} \\ {R}_{\cup_{l-1}} \BOUT \qquad &\text{and} \qquad b_{\mA_{\cup l-1}} \coloneqq\BIN f_{\mA_{\cup l-1}}  \\ 0 \BOUT\\
	A_{\mI_{\cup_{l-1}}} \coloneqq  J_{\mI_{l-1}}  \qquad &\text{and} \qquad b_{\mI_{\cup l-1}} \coloneqq f_{\mI_{\cup l-1}} 
\end{align}
$J$ are the Jacobians $J = \nabla_x f(x)$.
$R$ is a factor (for example Cholesky) of the positive-definite hierarchical Hessian $\nabla_{xx}^2\mathcal{L}=\hat{H} = R^T R$~\citep{pfeiffer2023}. It relies on the Hessian components  $H = \nabla^2_{xx} f(x)$ corresponding to each dimension of the functions $f$. 
Note that the Hessian components of the inequality constraints in $\mI_{\cup l-1}$ and ${\bI_l}$ are included in $R_{l-1}$ and $R_l$, respectively. $\hat{H}_{l}$ can be indefinite on construction but can be regularized to a positive-definite matrix for example by Schur regularization or the Higham algorithm~\citep{Higham1986}.
The matrix can also be approximated by a positive-definite BFGS update scheme as proposed in~\cite{pfeiffer2018}. Similarly to the latter, we set the Hessian to zero whenever the linearized model is accurate enough, that is $\Vert \hat{v} \Vert^2_2 \leq \epsilon$ ($\epsilon$ is a small numerical threshold; note that this condition is usually not fulfilled if the current state $x$ is far away from the optimum $x^*$, or the Jacobians $J$ are rank deficient and only a bad Hessian approximation exists). This avoids the unnecessary occupation of variables as the Hessian is full-rank on the variables that are in the corresponding constraint $f$.

The~\ref{eq:hlsp} can be solved by any of the solvers~\cite{Kanoun2009,escande2014,pfeiffer2021} for the primal $\Delta x$ and the dual / Lagrange multiplier estimate $\hat{\lambda}$ (see Sec.~\ref{sec:hlsp} for more details).
We do this repeatedly in a sequential hierarchical least-squares programming (S-HLSP)~\citep{pfeiffer2023} until some convergence criteria of the original non-linear~\ref{eq:NL-HLSP} is met. $k$ is the current S-HLSP (outer) iteration with $x_{k+1} = x_k + \Delta x_k$.
The HLSP at the current state $x_k$ is hereby referred to as HLSP \textit{sub-problem}. 
 We restrict the primal step by $\Vert\Delta x\Vert_{\infty} < \rho$. 
This \textit{trust region} constraint is a linear bound constraint ($\Vert \Delta x \Vert_\infty \leq \rho$, $\rho$ is the trust region radius) on the first level $l=0$ (note that we start the original non-linear problem~\ref{eq:NL-HLSP} from level $l=1$). It limits the discrepancy between the original~\ref{eq:NL-HLSP} and its~\ref{eq:hlsp} approximation. The trust region radius can be adapted for example as presented in~\citep{fletcher2002b} in the context of a step-filter for SQP (SQP-SF). In this work, we propose a hierarchical step-filter (HSF) for trust region adaptation based on the SQP step-filter, see Sec.~\ref{sec:shlsp}.
For an overall overview of the S-HLSP see Fig.~\ref{fig:hfilter}.

For later reference, we define 
the step $\Delta x_l$ of level $l$ as
\begin{equation}
	\Delta x_l = \sum_{j=0}^{l} \Delta x^j \qquad \text{with} \qquad \Delta x = \sum_{j=0}^{p} \Delta x^j \label{eq:stepdef}
\end{equation}
This means that $\Delta x_l$ represents the primal step resulting from the resolution of the levels $j=1,\dots,l$ with primal sub-steps $\Delta x^j$ while neglecting the contributions of the levels $l+1$ to $p$ (also see Alg.~\ref{alg:solvehlsp}). Furthermore, we define
the predicted objective reduction
\begin{align}
	\Delta q_l &= 2b_{\mathbb{E}_l}^T A_{\mathbb{E}_l}  \Delta x_k - \Delta x_k^T A_{\mathbb{E}_l}^T A_{\mathbb{E}_l} \Delta x_k + 2b_{\bI_l}^T A_{\bI_l} \Delta x_k - \Delta x_k^T A_{\bI_l}^T A_{\bI_l} \Delta x_k	\nonumber
\end{align}
and the actual reduction
\begin{equation}
	\Delta f_l = \left\Vert f_l^+(x_k) \right\Vert^2_2 - \left\Vert f_l^+(x_k + \Delta x_k) \right\Vert^2_2
\end{equation}
where
\begin{equation}
f_{l}^+ \coloneqq \BIN f_{\bE_l} \\ \max(0,f_{\bI_l}) \BOUT
\end{equation} 
 Note that we use the squared norm for $\Delta f_l$ in order to be consistent with $\Delta q_l$.

\subsection{Reduced Hessian based interior-point method for hierarchical least-squares programming}

\label{sec:hlsp}
\begin{algorithm}[t!]
	\caption{s-$\mathcal{N}$IPM-HLSP with NSTRA}
	\label{alg:solvehlsp}
	\begin{algorithmic}[1]
		\Statex\textbf{Input:} {$x$, $\rho_{1:p}$}
		\Statex \textbf{Output:} $\Delta x$, $\hat{v}^*_{\mA_{\cup p}}$, $\hat{\lambda}_{\mA_{\cup p}}$
		\State ${\iota}=1$ 
		\State $\Delta x_{\iota} = 0$
		\For{$l=1:p$}
		\State Set $\rho = \rho_l$
		\State $\Delta x^l = 0$
		\While{$\Vert \nabla\hat{\mathcal{L}}_l \Vert_2 \neq 0$ of~\ref{eq:hlsp}}
		\State $\Delta \hat{z}^l_{\iota}\leftarrow$ Solve~\eqref{eq:HLSPNeNmethod} at $x$ with $\rho$
		\State $\hat{\Delta} x^l_{\iota} = N_{l-1}\hat{\Delta} z^l_{\iota}$
		\State Compute dual step $\Delta v_l$, $\Delta w_l$ and $\Delta\hat{\lambda}_{l,\iota}$ from substitutions of $\nabla\hat{\mathcal{L}}_l = 0$ of~\ref{eq:hlsp}
		\State $\alpha\leftarrow$ line search for dual feasibility~\eqref{eq:linesearch_li},~\eqref{eq:linesearch_l-1i}
		\State Primal step: ${\Delta} x^l = {\Delta} x^l + \alpha \hat{\Delta} x^l_{\iota}$
		\State Dual step: $\parm_{{\iota}+1} = \parm_{{\iota}} + \alpha\Delta \parm_{{\iota}}$ for new $\hat{v}_l$, $\hat{w}_l$, $\hat{\lambda}_l$
		\State $\iota = \iota + 1$
		\EndWhile
		\State $\mA_{l^*}$, $\mI_{\cup l^*}\leftarrow {\mI_{\cup l-1}}$ depending on condition~\eqref{eq:addcondl-1}
		\State $\hat{v}^*_{\mA_l^*} \leftarrow v_{\mI_{l-1}}$ depending on condition~\eqref{eq:addcondl-1}
		\State $Z_{l^*}\leftarrow$ Alg.~\ref{alg:recturnback} with $A_{\umA_{l^*}}N_{l-1}$ as input
		\State $N_{l^*} = N_{l-1}Z_{l^*}$
		\State $\mA_{l}\leftarrow$ all equality constraints ${\mathbb{E}_l}$
		\State $\hat{v}^*_{\mA_l} \leftarrow v_{\bE_l}$
		\State $\mA_{l}$, $\mI_{\cup l}\leftarrow$ ${\bI_l}$, ${\mI_{\cup l-1}}$ depending on condition~\eqref{eq:addcondl}
		\State $\hat{v}^*_{\mA_l} \leftarrow \BIN \hat{v}_{\mA_l}^{*T} &  v_{\bI_l}^T & v_{\mI_{l-1}}^T \BOUT^T$ depending on condition~\eqref{eq:addcondl}
		\State $Z_l\leftarrow$ Alg.~\ref{alg:recturnback} with $A_{\umA_{l}}N_{l^*}$ as input
		\State $N_l = N_{l^*}Z_l$
		\EndFor
		\State \Return $\Delta x$, $\hat{v}^*_{\mA_{\cup p}}$, $\hat{\lambda}_{\mA_{\cup p}}$
	\end{algorithmic}
\end{algorithm}

We solve the~\ref{eq:hlsp} by means of a sparse version of the reduced Hessian based interior point method presented in~\cite{pfeiffer2021}. An algorithmic overview with some extensions developed throughout this article (nullspace trust region adaptation (NSTRA), see Sec.~\ref{sec:nstradapt}) is given in Alg.~\ref{alg:solvehlsp}. For a detailed description of the algorithm the reader is referred to the original work in~\cite{pfeiffer2021}.

	First, two positive slack variables, $\hat{w}_{\mathbb{I}_l}$ for the inequality constraints on the current level $l$ and $\hat{w}_{\mI_{\cup l-1}}$ for the inactive inequality constraints of the previous levels, are introduced to the~\ref{eq:hlsp}. They are penalized by the log function to maintain positive values (`log-barrier'):
\begin{align}
	\mini_{\Delta x,\hat{v}_{\mathbb{E}_l},\hat{v}_{\mathbb{I}_l},\hat{w}_{\mathbb{I}_l},\hat{w}_{\mI_{\cup l-1}}} & \qquad \frac{1}{2}\Vert \hat{v}_{\mathbb{E}_l} \Vert^2_2 + \frac{1}{2}\Vert \hat{v}_{\mathbb{I}_l} \Vert^2_2 - \sigma_{\mathbb{I}_l}\mu_{\mathbb{I}_l}\sum\log(\hat{w}_{\mathbb{I}_l}) 
	- \sigma_{\mI_{\cup l-1}}\mu_{\mI_{\cup l-1}}\sum\log(\hat{w}_{\mI_{\cup l-1}})\nonumber\\
	\text{s.t.}
	& \qquad A_{\mathbb{E}_l}\Delta x - b_{\mathbb{E}_l} = \hat{v}_{\mathbb{E}_l}\nonumber\\
	& \qquad A_{\mathbb{I}_l}\Delta x - b_{\mathbb{I}_l} - \hat{v}_{\mathbb{I}_l} = \hat{w}_{\mathbb{I}_l}\nonumber\\
	&\qquad \hat{w}_{\mathbb{I}_l} \geq 0 \nonumber\\
	& \qquad A_{\mA_{\cup l-1}}\Delta x - b_{\mA_{\cup l-1}} = \hat{v}_{\mA_{\cup l-1}}^*\nonumber\\
	& \qquad A_{\mI_{\cup l-1}}\Delta x - b_{\mI_{\cup l-1}} = \hat{w}_{\mI_{\cup l-1}}\nonumber\\
	&\qquad \hat{w}_{\mI_{\cup l-1}} \geq 0\label{eq:ipmhqpopt} 
\end{align}
$\mu$ and  $\sigma$ are the duality measure and the centering parameter, respectively~\citep{vanderbei2013}.
The Lagrangian $\hat{\mathcal{L}}_l$ of the barrier~\ref{eq:hlsp} writes as 
	\begin{align}
	\hat{\mathcal{L}}_l &\coloneqq \frac{1}{2}\Vert \hat{v}_{\mathbb{E}_l} \Vert^2_2 + \frac{1}{2}\Vert \hat{v}_{\mathbb{I}_l} \Vert^2_2 - \sigma_{\mathbb{I}_l}\mu_{\mathbb{I}_l}\sum\log(\hat{w}_{\mathbb{I}_l}) -\sigma_{\mI_{\cup l-1}}\mu_{\mI_{\cup l-1}}\sum\log(\hat{w}_{\mI_{\cup l-1}})\nonumber\\
	& + \lambda_{\mathbb{E}_l}^T(A_{\mathbb{E}_l}\Delta x - b_{\mathbb{E}_l} - \hat{v}_{\mathbb{E}_l}) + \lambda_{\mathbb{I}_l}^T(A_{\mathbb{I}_l}\Delta x - b_{\mathbb{I}_l} - \hat{v}_{\mathbb{I}_l} - \hat{w}_{\mathbb{I}_l})\nonumber\\
	&  + \lambda_{\mA_{\cup l-1}}^T (A_{\mA_{\cup l-1}}\Delta x - b_{\mA_{\cup l-1}} - \hat{v}_{\mA_{\cup l-1}}^*)+ \lambda_{\mI_{\cup l-1}}^T(A_{\mI_{\cup l-1}}\Delta x - b_{\mI_{\cup l-1}} - \hat{w}_{\mI_{\cup l-1}})
\end{align}
The Newton's method is then applied to the first-order optimality condition $\nabla\hat{\mathcal{L}}_l=0$ of the barrier~\ref{eq:hlsp}. After appropriate substitutions of the dual steps $\Delta \hat{v}_l$, $\Delta \hat{w}_l$ (which represents both $\Delta \hat{w}_{\mathbb{I}_l}$ and $\Delta \hat{w}_{\mI_{\cup l-1}}$) and $\Delta \hat{\lambda}_l$, the HLSP solver repeatedly solves the Newton step for the primal step $z^l_{\iota}$ (for example by Cholesky decomposition; inner iteration ${\iota}$)
\begin{align}
	N_{l-1}^TC_{l,\iota} N_{l-1} 
	\hat{\Delta} z^l_{\iota}
	=
	N_{l-1}^Tr_{l,\iota}
	\label{eq:HLSPNeNmethod}
\end{align}
$l$ is the current level of the~\ref{eq:hlsp} that is resolved. $\hat{\Delta} z^l_i$ is a nullspace step with the projection $\hat{\Delta} x^l_{\iota} = N_{l-1}\hat{\Delta} z^l_{\iota}$. $N_{l-1}$ is a nullspace basis of the current active-set $\mA_{\cup l-1}$ of lower priority levels $1$ to $l-1$. 
$C_l$ is the weighted normal form $C_l=A^T\hat{\Lambda}A$ of a set of constraints $A$ representing the constraint matrices $A=\BIN A_{\mathbb{E}_l}^T &  A_{\bI_l}^T & A_{\mI_{\cup l-1}}^T\BOUT^T$. $\hat{\Lambda}$ is a matrix representing the current dual $\hat{\lambda}$ of the problem. $N_{l-1}^TC_lN_{l-1}$ is the corresponding reduced Hessian~\citep{jaeger1997} which, with the right choice of nullspace basis $N$, is either lower in dimension (dense programming) or in its number of non-zeros (sparse programming); $r$ is the right hand side. The variables $N$, $C$ and $r$ are variant depending on the current state $x_k$, the current step $\Delta x_{\iota}$ and / or the current Lagrange multiplier estimate $\hat{\lambda}_{l,\iota}$.

The IPM maintains a current update of the primal $\Delta x$ with $\Delta x_{{\iota}+1} = \Delta x_{\iota} + \alpha \Delta \hat{x}^l_{\iota}$. The dual variables $\hat{v}_l$, $\hat{w}_l$ and $\hat{\lambda}_{l,\iota}$ are updated with $\parm_{l,{\iota}+1} =  \parm_{l,\iota} + \alpha \Delta \parm_{l,\iota}$ where $\parm$ is a placeholder for the aforementioned variables.
$\alpha$ is a line search factor that maintains dual feasibility
	\begin{equation}
	\hat{v}_{\mathbb{I}_l} \leq 0 \qquad \text{and} \qquad
	\hat{w}_{\mathbb{I}_l} \geq 0\label{eq:linesearch_li}
\end{equation}
and
\begin{equation}
	\hat{\lambda}_{\mI_{\cup l-1}} \geq 0 \qquad \text{and} \qquad
	\hat{w}_{\mI_{\cup l-1}} \geq 0\label{eq:linesearch_l-1i}
\end{equation}
After level $l$ has converged with $\Vert \nabla\hat{\mathcal{L}}_l\Vert_2=0$, first the active constraints from $\mI_{\cup l-1}$ are added to the active set $\mA_{l^*}$ of the \textit{virtual} priority level $l^*$ via the conditions
	\begin{align}
	\hat{w}_{\mI_{\cup l-1}}(c) < \xi \qquad \text{ and } \qquad \hat{\lambda}_{\mI_{\cup l-1}}(c) > \xi  \label{eq:addcondl-1}
\end{align}
The nullspace basis $Z_{l^*}$ corresponding to $\mathcal{A}_{l^*}$ is computed (Alg.~\ref{alg:recturnback}) and used to augment $N_{l^*} = N_{l-1} Z_{l^*}$.
Secondly,
the active constraint set $\mA_{l}$ is determined according to the conditions
	\begin{equation}
	\hat{w}_{\mathbb{I}_l}(c) < \xi \qquad \text{ and } \qquad \hat{v}_{\mathbb{I}_l}(c) < -\xi \label{eq:addcondl}
\end{equation}
The nullspace basis $Z_{l}$ corresponding to $\mathcal{A}_{l}$ is computed (Alg.~\ref{alg:recturnback}) and used to augment $N_{l} = N_{l^*}  Z_{l}$.
Inactive constraints from $\mI_{\cup l-1}$ and $\mI_{l}$ are added to the inactive constraint set $\mI_{\cup l}$.
The notion of virtual priority levels ensures that the priority order between activated constraints from $\mI_{\cup l-1}$ and $\mI_l$ is observed.

 This procedure is repeated for all levels $l=1,\dots,p$ of the~\ref{eq:hlsp}.

\subsection{Sparsity in hierarchical least-squares programming for discrete optimal control}

The Jacobians $J$ (and similarly the Hessians) of the~\ref{eq:hlsp} can expose a banded structure as the result of discrete optimal control problems of the form
\begin{align}
	&\mini_{x,v_{l}} \quad  \frac{1}{2} \left\| {v}_{l} \right\|^2_2 \qquad l = 1,\dots,p  \label{eq:oc}\\
	&\mbox{s.t.} \quad  f_l({x_{\tau+1},x_\tau})  \leqq  {v}_{l,\tau} 	\qquad \tau = 1,\dots,T\nonumber\\
	& \qquad{f}_{\cup l-1}({x_{\tau+1},x_\tau})  \leqq  {v}_{\cup l-1,\tau}^* \nonumber
\end{align}
Here, $f$ represent both equality and inequality constraints which is indicated by the symbol $\leqq$.
The variable vector $x \coloneqq \BIN x_1 & \cdots & x_T\BOUT^T$
is composed of variables $x_{\tau}$ associated to each time-step $\tau=1,\dots,T$. $T$ is the length of the optimal control horizon. Similarly, $v_l$ takes the form $v_l\coloneqq\BIN v_{l,1}^T & \cdots & v_{l,T}^T \BOUT^T$.
Due to the dependency of subsequent variable segments, the Jacobians $J$ of the HLSP exhibit a banded structure which looks as follows
\begin{align}
	&J_l = 
	\BIN \nabla_{x_1} f_l(x_1, x_2) & \nabla_{x_2} f_l(x_1, x_2) & \cdots & 0 \\
	0 & \nabla_{x_2} f_l(x_2, x_3) & \cdots & 0 \\
	\vdots & \vdots & \ddots & \vdots \\
	0 & 0 & \cdots & \nabla_{x_T} f_l(x_{T-1}, x_T)
	\BOUT\label{eq:blockA}
\end{align}

In the HLSP solver~\citep{escande2014}, a dense nullspace basis $N$ was chosen. It is based on a rank-revealing QR decomposition with no but incidental sparsity maintenance. This can instead be achieved by nullspace bases based on the rank-revealing LU decomposition. Implementations exist that maintain sparse LU factors for example by adhering to the Markovitz criterion~\citep{markowitz1957}. While the resulting basis is sparse, higher level sparsity patterns like bands are not specifically considered and will be destroyed in the process. One basis that is able to maintain banded matrix sparsity is based on the turnback algorithm~\citep{berry1985} While this is not guaranteed, it has been shown to reliably produce banded nullspace bases~\citep{Yang2019}.
In a reduced Hessian based HLSP solver it is important to leverage this sparsity in the nullspace basis in order to remain computationally tractable, especially with long horizons (linear computational complexity in horizon length $T$, instead of cubic dependency for dense bases). In this article we present an efficient computation of turnback based nullspace bases (see Sec.~\ref{sec:turnback}). 

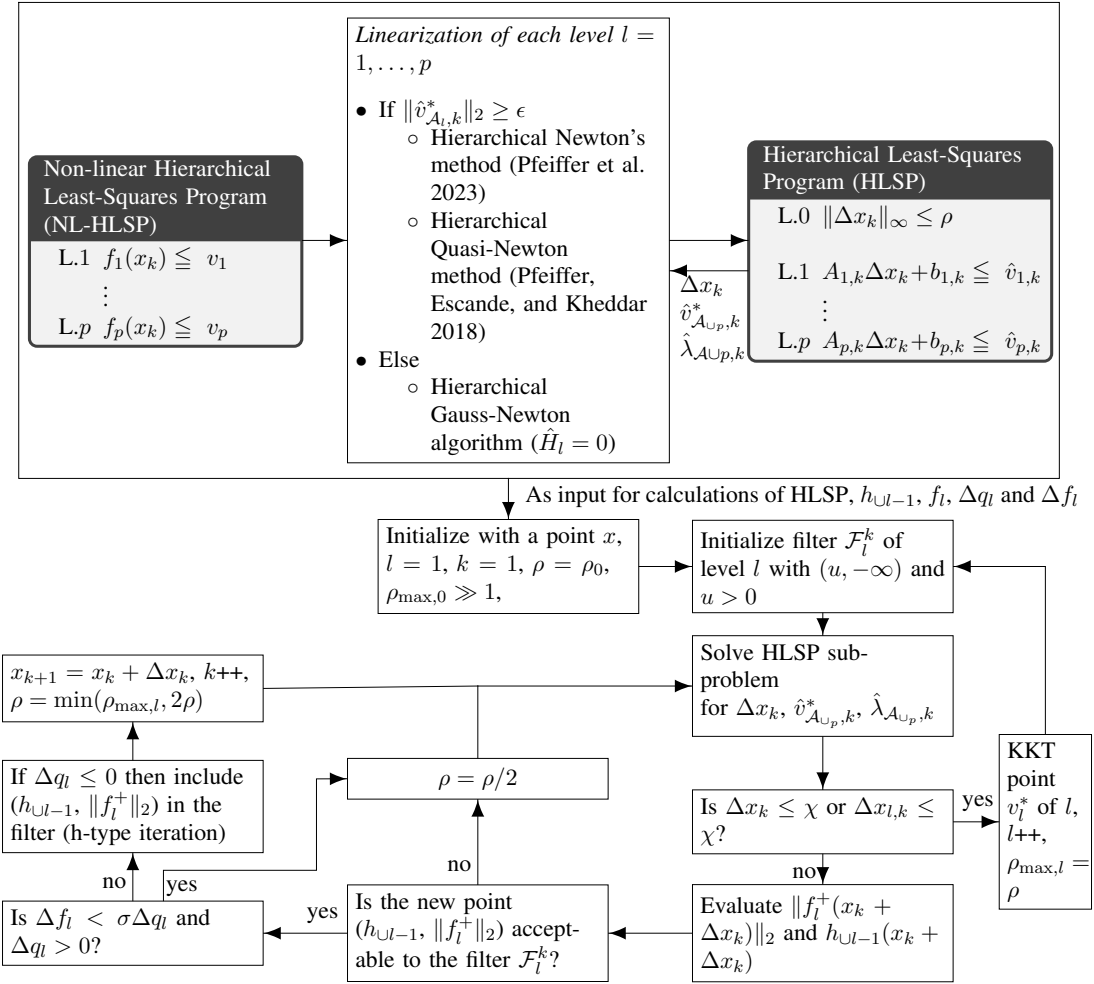
\begin{figure*}[htp!]
	\centering
	        \resizebox{\columnwidth}{!}{%
	\begin{tikzpicture}[line cap=rect]
		
		\node[align=left,text width=4.5cm] (NLHLSP) at (-0.75,-1) {
			\begin{tcolorbox}[title=Non-linear Hierarchical \\Least-Squares Program\\ \eqref{eq:NL-HLSP},boxsep=1pt,left=5pt,right=3pt,top=2pt,bottom=1pt] 
				\begin{enumerate}
					\item[L.$1$] $f_1(x_k) \leqq~v_1$\\
					{\centering$\vdots$}
					\item[L.$p$] $f_p(x_k) \leqq~v_p$
				\end{enumerate}	
			\end{tcolorbox}
		};
		
		\node[draw,align=left,text width=5cm] (Lin) at (4.85,-1) {
			\textit{Linearization of each level $l=1,\dots,p$}\\
			\begin{itemize}[leftmargin=*]
				\item If $\Vert \hat{v}_{\mA_{l},k}^*\Vert_2 \geq \epsilon$
				\begin{itemize}
					\item {{Hierarchical Newton's method~\citep{pfeiffer2023}}}
					\item Hierarchical Quasi-Newton method~\citep{pfeiffer2018}
				\end{itemize}
				\item Else 
				\begin{itemize}
					\item {Hierarchical Gauss-Newton algorithm ($\hat{H}_l = 0$)}
				\end{itemize}
			\end{itemize}
		};

		\node[text width=5cm] (HLSP) at (11.25,-1) {
			\begin{tcolorbox}[title=Hierarchical Least-Squares\\ Program \eqref{eq:hlsp},boxsep=1pt,left=5pt,right=3pt,top=2pt,bottom=1pt] 
				\begin{enumerate}
					\item[L.$0$] $\Vert\Delta x_k\Vert_{\infty} \leq \rho$\\
					\item[L.$1$] $A_{1,k}\Delta x_k+b_{1,k}\leqq~\hat{v}_{1,k}$\\
					{\centering$\vdots$}
					\item[L.$p$] $A_{p,k}\Delta x_k + b_{p,k}  \leqq~\hat{v}_{p,k}$
				\end{enumerate}	
			\end{tcolorbox}
		};
		
		\node[align=left] (feedback) at ($(HLSP.west) + (-0.425,-1.3)$) {$\Delta x_k$\\$\hat{v}_{\mA_{\cup p},k}^*$\\$\hat{\lambda}_{\mA{\cup p},k}$};
		
		\node[] (input) at ($(Lin.south) + (4.8,-0.55)$) { 
			As input for calculations of~\ref{eq:hlsp}, $h_{\cup l-1}$, $f_l$, $\Delta q_l$ and $\Delta f_l$
		};
		
		\node[draw,text width=4cm] (Init) at ($(Lin.south) - (0,1.7)$) {
			{Initialize with a point $x$, $l=1$, $k=1$, $\rho = \rho_0$, $\rho_{\max,0} \gg 1$, }
		};
		
		\node[draw,text width=4cm] (InitFilt) at ($(Init.east) + (3,0)$) {
			{Initialize filter $\mathcal{F}_l^k$ of level $l$ with $(u,-\infty)$} and $u>0$
		};
		
		\node[draw,text width=4cm] (sol) at ($(InitFilt.south) + (0,-1.2)$) {
			{Solve~\ref{eq:hlsp} sub-problem}\\ 
			for $\Delta x_k$, $\hat{v}_{\mA_{\cup_p},k}^*$, $\hat{\lambda}_{\mA_{\cup_p},k}$
		};
		
		\node[draw, text width=4cm] (convtest) at ($(sol.south) + (0,-1.4)$) { 
			Is $\Delta x_{k} \leq \chi$ or $\Delta x_{l,k} \leq \chi$?
		};
		
		\node[] (yesconvtest) at ($(convtest.east) + (0.4,0.3)$) { 
			yes
		};
		
		\node[draw, text width=1.25cm] (conv) at ($(convtest.east) + (1.5,0)$) { 
			KKT point $v_l^*$ of $l$,\\
			$l$++, $\rho_{\max,l}=\rho$ 
		};

		\node[] (noconvtest) at ($(convtest.south) + (-0.3,-0.3)$) { 
			no
		};
		
		\node[draw, text width=4cm] (eval) at ($(convtest.south) + (0,-1.3)$) { 
			Evaluate $\Vert f_l^+(x_k + \Delta x_k)\Vert_2$ and $h_{\cup l-1}(x_k+\Delta x_k)$
		};
		
		\node[draw, text width=4cm] (test) at ($(eval.west) + (-3.5,0)$) { 
			Is the new point \\($h_{\cup l-1}$, $\Vert f_l^+\Vert_2$) acceptable to the filter $\mathcal{F}_l^k$?
		};
		\node[] (yestest) at ($(test.west) + (-0.4,0.3)$) { 
			yes
		};
		\node[] (notest) at ($(test.north) + (-0.3,0.3)$) { 
			no
		};
		
		\node[draw, text width=4cm] (trdec) at ($(test.north) + (0,1.74)$) { 
			\vspace{-17pt}
			\begin{equation}
				\rho = \rho/2\nonumber
			\end{equation}
		};
		
		\node[draw, text width=4cm] (modtest) at ($(test.west) + (-3.5,0)$) { 
			Is $\Delta f_l < \sigma \Delta q_l$ and $\Delta q_l > 0$?
		};
		\node[] (yesmodtest) at ($(modtest.north) + (0.8,0.2)$) { 
			yes
		};
		\node[] (nomodtest) at ($(modtest.north) + (-0.3,0.3)$) { 
			no
		};
		
		\node[draw, text width=4cm, align=left] (htype) at ($(modtest.north) + (0,1.55)$) { 
			If $\Delta q_l \leq 0$ then include ($h_{\cup l-1}$, $\Vert f_l^+\Vert_2$) in the filter (h-type iteration)
		};
		
		\node[draw, text width=4cm] (inc) at ($(htype.north) + (0,1.1775)$) { 
			$x_{k+1} = x_k + \Delta x_k$, $k$++, $\rho = \min(\rho_{\max,l},2\rho)$
		};

		%		\node[] (inc) at ($(Lin.south) + (1,-1)$) {$k=1$, $l=1$, $x$, $\rho = \rho$};
		
		\coordinate (b1)   at ($(Lin.north) + (-8,0.25)$);
		\coordinate (b2)   at ($(Lin.north) + (9,0.25)$);
		\coordinate (b3)   at ($(Lin.south) + (9,-0.25)$);
		\coordinate (b4)   at ($(Lin.south) + (-8,-0.25)$);
		
		\coordinate (c1)   at ($(sol.west) + (-3.5,0)$);
		\coordinate (c11)   at ($(sol.west) + (-3.5,0.)$);
		\coordinate (c2)   at ($(convtest.east) + (0.5,0)$);
		\coordinate (c3)   at ($(InitFilt.east) + (1.5,0)$);
		
		\draw[] (b1) -- (b2) -- (b3) -- (b4) -- (b1);
		\draw[decoration={markings,mark=at position 1 with
			{\arrow[scale=2,>=latex]{>}}},postaction={decorate}] ($(NLHLSP.east) + (-0.15,0)$) -- (Lin.west);
		\draw[decoration={markings,mark=at position 1 with
			{\arrow[scale=2,>=latex]{>}}},postaction={decorate}] (Lin.east) -- ($(HLSP.west) + (0.15,0)$);
		\draw[decoration={markings,mark=at position 1 with
			{\arrow[scale=2,>=latex]{>}}},postaction={decorate}] ($(HLSP.west) - (-0.15,0.5)$) -- ($(Lin.east) - (0,0.5)$);
		\draw[decoration={markings,mark=at position 1 with
			{\arrow[scale=2,>=latex]{>}}},postaction={decorate}] ($(Lin.south) - (-0.,0.25)$) -- ($(Init.north) - (0,0)$);
		\draw[decoration={markings,mark=at position 1 with
			{\arrow[scale=2,>=latex]{>}}},postaction={decorate}] ($(Init.east) - (-0.,0)$) -- ($(InitFilt.west) - (0,0)$);
		\draw[decoration={markings,mark=at position 1 with
			{\arrow[scale=2,>=latex]{>}}},postaction={decorate}] ($(InitFilt.south) - (-0.,0)$) -- ($(sol.north) - (0,0)$);
		\draw[decoration={markings,mark=at position 1 with
			{\arrow[scale=2,>=latex]{>}}},postaction={decorate}] ($(sol.south) - (-0.,0)$) -- ($(convtest.north) - (0,0)$);
		\draw[decoration={markings,mark=at position 1 with
			{\arrow[scale=2,>=latex]{>}}},postaction={decorate}] ($(convtest.south) - (-0.,0)$) -- ($(eval.north) - (0,0)$);
		\draw[decoration={markings,mark=at position 1 with
			{\arrow[scale=2,>=latex]{>}}},postaction={decorate}] ($(eval.west) - (-0.,0)$) -- ($(test.east) - (0,0)$);
		\draw[decoration={markings,mark=at position 1 with
			{\arrow[scale=2,>=latex]{>}}},postaction={decorate}] ($(test.west) - (-0.,0)$) -- ($(modtest.east) - (0,0)$);
		\draw[decoration={markings,mark=at position 1 with
			{\arrow[scale=2,>=latex]{>}}},postaction={decorate}] ($(modtest.north) - (-0.,0)$) -- ($(htype.south) - (0,0)$);
		\draw[decoration={markings,mark=at position 1 with
			{\arrow[scale=2,>=latex]{>}}},postaction={decorate}] ($(htype.north) - (-0.,0)$) -- ($(inc.south) - (0,0)$);
		\draw[decoration={markings,mark=at position 1 with
			{\arrow[scale=2,>=latex]{>}}},postaction={decorate}] ($(test.north) - (-0.,0)$) -- ($(trdec.south) - (0,0)$);
		\draw[] ($(inc.east) - (-0.,0)$) -- ($(c11.west) - (0,0)$) -- ($(c1.west) - (0,0)$);
		\draw[] ($(trdec.north) - (-0.,0)$) -- ($(c1.south) - (0,0)$);
		\draw[decoration={markings,mark=at position 1 with
			{\arrow[scale=2,>=latex]{>}}},postaction={decorate}] ($(c1.east) - (-0.,0)$) -- ($(sol.west) - (0,0)$);
		%		\draw[decoration={markings,mark=at position 1 with
			%	{\arrow[scale=2,>=latex]{>}}},postaction={decorate}] ($(convtest.east) - (0,0)$) -- ($(c2.east) - (0,0)$) -- ($(c3.east) - (0,0)$) -- ($(InitFilt.east) - (0,0)$);
		\draw[decoration={markings,mark=at position 1 with
			{\arrow[scale=2,>=latex]{>}}},postaction={decorate}] ($(convtest.east) - (0,0)$) -- ($(conv.west)$);
		\draw[decoration={markings,mark=at position 1 with
			{\arrow[scale=2,>=latex]{>}}},postaction={decorate}] ($(conv.north) - (0,0)$) -- ($(c3.south)$) -- ($(InitFilt.east)$);
		\draw[decoration={markings,mark=at position 1 with
			{\arrow[scale=2,>=latex]{>}}},postaction={decorate}] ($(modtest.north) + (0.5,0)$) -- ($(modtest.north) + (0.5,0.5)$) -- ($(modtest.north) + (3,0.5)$) -- ($(modtest.north) + (3,2.005)$) -- ($(trdec.west)$);
	\end{tikzpicture}}
	\caption{A symbolic overview of the sequential hierarchical least-squares programming (S-HLSP) with trust region and hierarchical step-filter based on the SQP step-filter~\citep{fletcher2002b} to solve non-linear hierarchical least-squares programs~\eqref{eq:NL-HLSP} with $p$ levels.
	}
	\label{fig:hfilter}
\end{figure*}

\section{Sequential hierarchical least-squares programming with trust-region and hierarchical step-filter}

\label{sec:shlsp}

Sequential hierarchical least-squares programming is commonly applied in real-time robot control~\citep{Kanoun2009, escande2014}. However, in these works no statements with regards to convergence properties are provided. The authors in~\cite{pfeiffer2018, pfeiffer2023} contextualized the control aspect within optimization and specifically introduced a trust-region constraint within the control framework. However, due to real-time requirements, the trust-region adaptation remained rudimentary (since a solution can not be recomputed with a different trust region radius due to computational limitations) and primarily focused on suppressing numerical instabilities in the case of kinematic and algorithmic singularities. In this work, we aim to embed S-HLSP fully within NLP. We formulate a hierarchical step-filter as described below to adjust the trust-region and determine whether a step from the HLSP sub-problem is acceptable to the original~\ref{eq:NL-HLSP}. 

\subsection{The SQP step-filter for a single level of the NL-HLSP}

The SQP step-filter (SQP-SF)~\citep{fletcher2002b} is a method for non-linear optimizers to measure the progress in the approximated sub-problems with respect to the original non-linear one. It thereby weighs progress in the objective function and feasibility of the constraints against each other without the necessity of tuning a penalty parameter. 

The filter applied to a level $l$ of the~\ref{eq:NL-HLSP} is a list of pairs $(h_{\cup l-1}, \Vert f_l^+\Vert_2)$ with
\begin{equation}
	h_{\cup l-1}(x_k+\Delta x_k) = \Vert f_{\bI_{\cup l-1}}^+ - v_{\bI_{\cup l-1}}^*\Vert_1 + \Vert f_{\bE_{\cup l-1}} - v_{\bE_{\cup l-1}}^*\Vert_1
\end{equation} 
The initialization with the filter point $(u,-\infty)$ limits constraint infeasibility $h_{\cup l-1} \leq u$.  

The HLSP sub-problem is solved for $\Delta x_k$, $\hat{v}_{\cup p,k}^*$ and the Lagrange multipliers $\hat{\lambda}_{\cup p}$ indicating the conflict between the hierarchy levels of the HLSP. 
If the step is sufficiently small $\Delta x_{k}\leq\chi$ or $\Delta x_{l,k}\leq\chi$, a KKT point $v_l^*$ of level $l$ has been identified. The step $\Delta x_{l}$ is defined in~\eqref{eq:stepdef} and is the sum of the primal sub-steps of levels $1$ to $l$ of the~\ref{eq:hlsp}. Using this convergence criterion captures cases where the current HSF level $l$ converges well but lower priority levels exhibit oscillations due to an inappropriately chosen trust region radius (see more in Sec.~\ref{sec:nstradapt}).

The new point $x_k+\Delta x_k$ is evaluated to $h_{\cup l-1}(x_k + \Delta x_k)$
and $\Vert f_l^+(x_k+\Delta x_k)\Vert_2$.
The new filter point $(h_{\cup l-1}, \Vert f_l^+\Vert_2)$ is then checked for acceptance with respect to the current filter. A step $\Delta x_k$ is acceptable to the filter if following condition holds for all $j\in\mathcal{F}_l^k$. $\mathcal{F}_l^k$ is the set of all points in the filter of level $l$ at the S-HLSP iteration $k$.
\begin{align}
	h_{\cup l-1} \leq \beta h_{\cup l-1}^j \qquad \text{or} \qquad \Vert f_l^+\Vert_2 + \gamma \leq \Vert f_l^{+j}\Vert_2
	\label{eq:acc}
\end{align}
$\beta$ is a value close to 1 and $\gamma$ is a value close to zero. Both values fulfill the condition $0 < \gamma < \beta < 1$.
Any point $(h_{\cup l-1}^j, \Vert f_l^{+j}\Vert_2)$ of the current filter $\mathcal{F}^k$
\textit{dominated} by the new point
\begin{align}
	h_{\cup l-1} \leq h_{\cup l-1}^j \qquad \text{and} \qquad \Vert f_l^+\Vert_2 \leq \Vert f_l^{+j}\Vert_2
\end{align}
is removed from the filter. Since the model reliably represents the non-linear problem, the trust region radius is increased. If this is not the case, the trust region radius is reduced and the HLSP is solved again, making ground for a new step $\Delta x_k$ that is again tested for filter acceptance. 
If the step is acceptable to the filter, the sufficient reduction criterion is checked
\begin{equation}
	\Delta f_l \geq \sigma \Delta q_l 
\end{equation} 
for positive $\Delta q_l$.
If this is not the case, the trust region is reduced and a new step $\Delta x_k$ is computed. Otherwise, the step is accepted. Only if $\Delta q_l \leq 0$, the new filter point is included in the filter (h-type iteration). 

\subsection{The hierarchical step filter for NL-HLSP}

\begin{algorithm}[t!]
	\caption{S-HLSP with HSF}\label{alg:hsf}
	\begin{algorithmic}[1]
		\Statex \textbf{Input:} {$x_0$, $\rho_0\gg 1$}
		\Statex \textbf{Output:} {$x$, $\lambda_{\cup p}$}
		\State $\rho = \rho_0$
		\State $k=1$
		\State $x_k = x_0$
		\For{$l=1,\dots,p$}
		\While{$\Vert \Delta x_{k} \Vert^2_2 > \chi$ and $\Vert \Delta x_{l,k} \Vert^2_2 > \chi$}
		\State $\Delta x_k,\hat{v}^*_{\cup p,k},\hat{\lambda}_{\cup l,k+1}\leftarrow$ Alg.~\ref{alg:solvehlsp} with $x_k$, $\rho$
		\State $\rho, acc \leftarrow$ SQP-SF on~\ref{eq:NL-HLSP} with $l$, $x_k$, $\Delta x_k$, $v_{\cup l-1}^*$, $\rho$
		\IIf{$acc$} $x_{k+1} = x_k + \Delta x_k$
		\State $k = k+1$
		\EndWhile
		\State $\rho_{\max,l+1} = \rho$
		\State $v_l^* = v_{l,k-1} = \hat{v}^*_{l, k-1}$
		\EndFor
		\State \Return $x$, $\lambda_{\cup p}$
	\end{algorithmic}
\end{algorithm}	

The HSF for S-HLSP considers prioritization by consecutively applying the SQP-SF on each level $l=1,\dots,p$ of the~\ref{eq:NL-HLSP}. An algorithmic and symbolic overview of the S-HLSP with HSF is given in Alg.~\ref{alg:hsf} and Fig.~\ref{fig:hfilter}, respectively.
First, the trust region radius is set to $\rho = \rho_0$ and the maximum trust region radius to $\rho_{\max,0} \gg 1$.
The S-HLSP now iterates through the~\ref{eq:NL-HLSP} by applying the SQP-SF to the~\ref{eq:NL-HLSP} of level $l$ with maximum trust region radius $\rho \leq \rho_{\max,l-1}$. After the~\ref{eq:hlsp} sub-problem has been resolved, the SQP-SF checks whether the step $\Delta x_k$ is acceptable~\eqref{eq:acc} to the filter by evaluating the~\ref{eq:NL-HLSP} at both $x_k$ and $x_k + \Delta x_k$. The trust region radius is increased ($\rho\leftarrow \max(\rho_{\max,l},2\rho)$ in case of step acceptance and decreased ($\rho\leftarrow \rho/2$) otherwise.
Note that the initial trust region radius does not carry any compatibility requirement with regards to the inactive inequality constraints $\mI_{\cup l-1}$ as in the original filter. Due to the least-squares formulation with slacks in the~\ref{eq:hlsp} sub-problems, a compatible solution $v_{\cup p}^*$ can always be found. This discards the need for a restoration phase~\citep{fletcher2002b} in our algorithm.

The~\ref{eq:hlsp} sub-problems are solved repeatedly for steps $\Delta x_k$. If the step is acceptable to the SQP-SF, the step is applied  $x_{k+1} = x_k + \Delta _k$ until a KKT point of~\ref{eq:NL-HLSP} is reached with $\Vert \Delta x_{k} \Vert^2_2 \leq \chi$ or $\Vert \Delta x_{l,k} \Vert^2_2 \leq \chi$. Upon convergence, we set $\rho_{\max,l} = \rho$ and save $v_l^*$. Note that at this point the slacks from the non-linear and linearized problems $v_l^*$ and $\hat{v}_l^*$ coincide. The SQP-SF is then applied to the next level $l\rightarrow l+1$ of the~\ref{eq:NL-HLSP}.

In the case of step acceptance in the SQP-SF, the trust region radius is increased to a level below $\rho_{\max,l-1}$. Due to this restriction on the trust region radius, we can make following statement with regards to the convergence properties of the HSF.	
\begin{theorem}
	The hierarchical step-filter is globally convergent to the~\ref{eq:NL-HLSP} if for each level $l$ $\rho \leq \rho_{\max,l}$. 
\end{theorem}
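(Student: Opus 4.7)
The plan is to establish the theorem by induction on the priority level $l$, using the global convergence of the single-level SQP step-filter (SQP-SF) of~\cite{fletcher2002b} as a black box. Since the inner loop of Alg.~\ref{alg:hsf} at each level $l$ is precisely an invocation of SQP-SF applied to the non-linear problem of minimizing $\tfrac{1}{2}\Vert f_l^+\Vert_2^2$ subject to the frozen higher-priority constraints $f_{\mA_{\cup l-1}} = v_{\mA_{\cup l-1}}^*$ and $f_{\mI_{\cup l-1}} \leq 0$, it suffices to show that (i) all hypotheses underlying Fletcher--Leyffer global convergence remain valid at every level, and (ii) the cap $\rho \leq \rho_{\max,l}$ prevents any step from corrupting the slack values $v_{\cup l-1}^*$ already locked in from previous levels.

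For the base case $l = 1$ there is nothing beyond SQP-SF to verify: with initial cap $\rho_{\max,0} \gg 1$, the filter initialisation $(u,-\infty)$, and the slack formulation of the~\ref{eq:NL-HLSP}, a compatible linearised point always exists, so no restoration phase is required and Fletcher's argument yields convergence to a KKT point with slack $v_1^*$ together with a finite optimal radius $\rho_{\max,1}$. For the inductive step, assume optimal slacks $v_{\cup l-1}^*$ and a radius $\rho_{\max,l-1}>0$ have been produced. Restarting the SQP-SF on level $l$ with $\rho \leq \rho_{\max,l-1}$ begins from a point whose linearised sub-problem is compatible in the sense required by filter theory, and as the filter only shrinks $\rho$ between accepted iterations, the cap $\rho \leq \rho_{\max,l}$ is maintained throughout. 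Within this cap, the usual SQP-SF ingredients---sufficient-reduction test, dominance-based filter update, and h-type inclusion whenever $\Delta q_l \leq 0$---are intact, so the single-level convergence theorem gives a KKT point $v_l^*$ of level $l$.

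The crucial step is item (ii): justifying that at no point during the inner iteration for level $l$ does the accepted step $\Delta x_k$ degrade the optimality established on levels $1,\dots,l-1$. The mechanism is that the higher-priority constraints enter the sub-problem~\ref{eq:hlsp} with their fixed slacks $v_{\cup l-1}^*$, so every accepted filter point satisfies $h_{\cup l-1}(x_k+\Delta x_k)$ bounded by $u$, and by choosing $\rho_{\max,l}$ small enough that the linearisation error of $f_{\cup l-1}$ is dominated by the step norm, the nonlinear violation stays below the filter envelope at every accepted iterate. A continuity and Taylor-remainder argument, combined with the definition of $\rho_{\max,l}$ as the value of $\rho$ at which level $l-1$ converged, then shows that $h_{\cup l-1}(x_k) \to 0$ along the sequence produced at level $l$, so the limit is feasible with respect to all higher levels; hierarchical prioritisation is preserved.

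The main obstacle I expect is precisely this compatibility argument: formally tying the existence of a ``safe'' radius $\rho_{\max,l}$ to the smoothness constants of $f_{\cup l-1}$ and to the filter envelope $u$, so that the Fletcher--Leyffer acceptance test on the present level can never force the algorithm across the feasibility boundary of the locked-in levels. Once this lemma is in place, gluing the per-level SQP-SF convergence results together by induction is routine, and the overall iterate $x_k$ converges to a KKT point of~\ref{eq:NL-HLSP} with full Lagrange information $\lambda_{\cup p}$.
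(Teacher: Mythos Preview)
Your proposal is correct and follows essentially the same approach as the paper: induction on the priority level, invoking Fletcher--Leyffer's SQP-SF global convergence as a black box at each stage, and arguing that the cap $\rho \leq \rho_{\max,l}$ ensures the linearised sub-problem ``represents the NL-HLSP's of levels $1$ to $l_K$ sufficiently'' so that steps at level $l$ do not disturb the KKT points of levels $1,\dots,l-1$. You are more explicit than the paper about the mechanism behind this preservation (Taylor remainder, continuity, filter envelope), and you correctly flag that making this compatibility step fully rigorous is the main technical burden---the paper's own proof simply asserts it in one sentence.
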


\begin{proof}
	Assume that level~$l_K$ of~\ref{eq:NL-HLSP} converges to a KKT point $(x^*, v_{l_K}^*)$  at a given trust region radius $\rho_{\max,l_K}=\rho$ due to the usage of the globally convergent SQP-SF~\citep{fletcher2002b}. The condition $\rho \leq \rho_{\max,l_{\text{K}}}\leq \dots \leq \rho_{\max,1}$ ensures that the~\ref{eq:hlsp} sub-problem of level $l = l_{\text{K}}+1$ represents the~\ref{eq:NL-HLSP}'s of levels $1$ to $l_{\text{K}}$ sufficiently. Therefore, each new point $x_{k+1} = x_k + \Delta x_k$ resulting from a step $\Delta x_{k}$ of the~\ref{eq:hlsp} sub-problem of the step-filter of level $l$ is still a KKT point $v_{\cup l_{\text{K}}}^*$ of levels $1$ to $l_{\text{K}}$. Global convergence of the HSF follows from applying above successively to levels $l_K = 1,\dots,p-1$.
\end{proof}
%This algorithm does not necessarily require the storage of the optimal slacks $v_l*$ as it is not disturbed $\uf_{l-1}(x_k+\Delta x)=\uv^*_{l-1}$ as stated above.
In practice, we observed that we can reset the trust region radius to a larger value than $\rho_{\max,l}$ without disturbing KKT points $v_{\cup l-1}^*$ of higher priority levels. This leads to faster convergence as the resolution of lower priority levels is not restricted by a small trust region radius resulting from the bad resolution of a higher priority one, for example due to a bad Hessian approximation. As we show next, we can formulate a more straightforward global convergence proof for the HSF without a condition on the trust region radius after all.

\begin{theorem}
	\label{th:globconv2}
	The hierarchical step-filter is globally convergent to the~\ref{eq:NL-HLSP} as long as the filter of each level $l$ is initialized with a point $(u,-\infty)$ for all $u > 0$. 
\end{theorem}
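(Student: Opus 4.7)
The plan is an induction on the priority level $l$, reducing each step to the single-level SQP-SF global convergence result of \citep{fletcher2002b}. In the base case $l = 1$, the measure $h_{\cup 0}$ is identically zero since there are no higher-priority constraints, so the level-$1$ iteration of the HSF is literally a single-level SQP-SF on the sum-of-squares objective $\tfrac{1}{2}\|f_1^+\|_2^2$ together with the trust-region constraint of level $l = 0$. Fletcher's theorem then delivers an accumulation point $x_1^*$ that is a KKT point of level $1$ together with its optimal slack $v_1^*$.

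For the inductive step, assume the HSF has driven levels $1, \ldots, l-1$ to a KKT point with optimal slacks $v_{\cup l-1}^*$, so the entry iterate into level $l$ satisfies $h_{\cup l-1} = 0$. The filter of level $l$ is initialized with $(u,-\infty)$ for some $u > 0$; since $-\infty$ can never be improved in the second coordinate, the acceptance rule~\eqref{eq:acc} forces every accepted iterate to obey $h_{\cup l-1} \leq \beta u$, so the iterates remain in a bounded tube around the feasible set of the higher-priority constraints rather than drifting to infinity. Interpreting $h_{\cup l-1}$ as Fletcher's constraint-violation measure and $\|f_l^+\|_2$ as Fletcher's objective, the level-$l$ iteration of the HSF is structurally identical to a single-level SQP-SF, with the HLSP sub-problem playing the role of the SQP sub-problem. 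The slack reformulation built into the HLSP guarantees sub-problem compatibility, so no restoration phase is required. Fletcher's global convergence theorem then yields an accumulation point $x_l^*$ that is a KKT point of the level-$l$ \ref{eq:NL-HLSP} and in particular feasible with $h_{\cup l-1}(x_l^*) = 0$, preserving the KKT status of levels $1, \ldots, l-1$ exactly. Iterating through $l = 1, \ldots, p$ concludes the proof.

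The hard part will be checking that the preconditions of Fletcher's theorem transfer cleanly: in particular that $\Delta q_l$ supplies the predicted model decrease Fletcher requires, that $\Delta f_l$ supplies the matching actual decrease, and that Alg.~\ref{alg:solvehlsp} returns a KKT-consistent triple $(\Delta x_k, \hat{v}^*_{\cup p,k}, \hat{\lambda}_{\cup p,k})$ whose properties substitute for those of Fletcher's QP sub-problem solution. The squared-norm definitions of $\Delta q_l$ and $\Delta f_l$ are already chosen to mirror Fletcher's conventions, and the sub-problem solver supplies the required KKT triple, so once this bookkeeping is carried out the translation should go through without modification and no residual condition on $\rho$ needs to be imposed, in contrast to the earlier theorem.
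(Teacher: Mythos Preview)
Your proposal is correct and follows essentially the same inductive reduction to Fletcher's SQP-SF convergence result as the paper's own proof; the paper's argument is in fact terser, omitting the explicit base case and the precondition bookkeeping you flag in your final paragraph. The key observation in both is that initializing the level-$l$ filter with $(u,-\infty)$ for $u>0$ admits the necessarily positive $h_{\cup l-1}$ that arises once $\rho$ is no longer capped by $\rho_{\max,l-1}$, while still bounding it, so that Fletcher's single-level theorem applies verbatim.
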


\begin{proof}
	Assume that level~$l_K$ of~\ref{eq:NL-HLSP} converges to a KKT point $(x^*, v_{l_K}^*)$  at a given trust region radius $\rho_{\max,l_K}=\rho$ due to the usage of the globally convergent SQP-SF~\citep{fletcher2002b}. For the next level $l=l_K+1$ with initial $x = x^*$ and $\Delta x_k$ resulting from solving the~\ref{eq:hlsp}, we have $h_{\cup l_K} = \Vert f_{\cup l_K}^+(x + \Delta x) - v_{\cup l_K}^*\Vert_1 = 0$ only for linear constraints or if $\rho \leq \rho_{\max,_{l_K}}$. Otherwise, we necessarily have $h_{\cup l_K} = \Vert f_{\cup l_K}^+(x + \Delta x) - v_{\cup l_K}^*\Vert_1 > 0$. The filter with point $(u,-\infty)$ can therefore only accept a new point $(h_{\cup l_K}, \Vert f_l^+\Vert_2)$ if an infeasible point $h_{\cup l_K} \leq u$ with $u>0$ is admissible. If so, the global convergence proof of~\citep{fletcher2002b} applies to the SF of level $l$. Global convergence of the HSF follows from applying above successively to levels $l_K = 1,\dots,p-1$
\end{proof}

Throughout the remainder of the article, we use the specifications of theorem~\ref{th:globconv2} for the implementation of our algorithm (see Alg.~\ref{alg:nstra}).

\subsection{Some considerations towards a comprehensive hierarchical filter}

\subsubsection{Solving the whole HLSP}
One question that naturally arises is whether in every outer S-HLSP iteration the~\ref{eq:hlsp} should be solved fully up to and including level $p$ or only up to the current HSF level $l$. The latter option would presumably require less computational effort since only the relevant part of the HLSP is solved that is actually steered towards a KKT point by the HSF. However, we made the empirical observation that by doing so the S-HLSP requires overall more iterations (even without the nullspace trust-region adaptation, see~\ref{sec:nstradapt}). One explanation could be that even if the filter is treating level $l$, the lower levels still benefit from the step resulting from solving the whole HLSP. For example, if level $l$ has continuous optima like seen for the disk constraint (Sec.~\ref{eval:nlpt}), it might end up at a minimum that is less optimal for a lower priority level $l+1$. 
On the other hand, if the whole HLSP is solved some informed steps towards an optimum of $l+1$ may already have been achieved.

\subsubsection{Nullspace trust region adaptation}
\label{sec:nstradapt}

\begin{algorithm}[t!]
	\caption{S-HLSP with HSF and NSTRA}\label{alg:nstra}
	\begin{algorithmic}[1]
		\Statex \textbf{Input:} {$x_0$, $\rho_0\gg 1$}
		\Statex \textbf{Output:} {$x$, $\lambda_{\cup p}$}
		\State $k=1$
		\State SQP-SF = \{SQP-SF$_1$, \dots, SQP-SF$_p$\}
		\For{$l=1,\dots,p$}
		\State $\rho_{1:p} =\rho_{0,1:p}$
		\While{$\Vert \Delta x_{k} \Vert^2 > \chi$ and $\Vert \Delta x_{l,k} \Vert^2 > \chi$}
		\State $\Delta x_k,\hat{v}^*_{\cup p,k},\hat{\lambda}_{\cup p,k+1}\leftarrow$ Alg.~\ref{alg:solvehlsp} with $x_k$, $\rho_{1:p}$
		\For{$j=l,\dots,p$}
		\State $\rho_j, acc_j \leftarrow$ SQP-SF$_j$ on~\ref{eq:NL-HLSP} with $x_k$, $\Delta x_k$, $v_{\cup l-1}^*$, $\rho_j$
		\EndFor
		\IIf{$acc_l$} $x_{k+1} = x_k + \Delta k$
		\State $k = k+1$
		\EndWhile
		\State $v_l^* = v_{l,k-1} = \hat{v}^*_{l, k-1}$
		\EndFor
		\State \Return $x$, $\lambda_{\cup p}$
	\end{algorithmic}
\end{algorithm}	

Given the fact that we always solve the full~\ref{eq:hlsp} sub-problem including all priority levels, it would be desirable to let  priority levels lower than the current HSF level converge as much as possible. 
We achieve this by integration of a trust region adaptation method directly within the~\ref{eq:hlsp} sub-problem solver. This nullspace trust region adaptation (NSTRA) within S-HLSP according to theorem~\ref{th:globconv2} is described in Alg.~\ref{alg:solvehlsp} and Alg.~\ref{alg:nstra}.

We create an SQP-SF for each priority level $j=l,\dots,p$. The current trust region radii $\rho_{1:p}$ are passed to the HLSP solver~Alg.~\ref{alg:solvehlsp}.
Here, the trust region radius is consistently updated to $\rho = \rho_{l+1}$ after a level $l$ of the~\ref{eq:hlsp} has resolved. This allows us to obtain a comprehensive overview of `free' variables (which are not used by higher priority levels) by means of nullspace basis computation of the active set $\mathcal{A}_l$. For example, if we have $n=4$ and $f_1(x_1, x_2, x_3)\in\mathbb{R}^{2}$, $J_1(x_1, x_2, x_3, x_4)\in\mathbb{R}^{2\times 4}$ and $f_2(x_3, x_4)\in\mathbb{R}^{1}$, $J_2(x_1, x_2, x_3, x_4)\in\mathbb{R}^{1\times 4}$ we would assume that the free variable for level $2$ is $x_4$ (since $f_1$ occupies $x_1$, $x_2$ and $x_3$). However, at a closer look it becomes apparent that the Jacobian of $f_1$ can be of  at most rank $r_{\max}=\min(m_1,n)=m_1=2$. Therefore, there are at least two ($n-m_1=2$) free variables left for level 2 (more if $J_1$ is rank deficient). Note that we do not make an explicit analysis of the nullspace bases of each level to determine the free variables. Occupied variables are naturally removed by the projection of the inactive constraint $A_{\mI_{\cup l-1}}$ (which contain the inactive trust region constraints) into the nullspace of level $l$.

After the HLSP step computation $\Delta x$ and $\lambda_{\cup p}$ from Alg.~\ref{alg:solvehlsp}, the filters of all levels are updated. The overall step acceptance is determined by the filter of the current level $l$. 
At the same time, the trust region radii of lower level $l+1,\dots,p$ are modified according to the SQP-SF state of their respective levels.

This adaptation method does not change the dual variables of already active constraints since the corresponding primal variables are already fixed and the dual itself does not depend on the right hand side $b$ (which represents the trust region radius). 
Furthermore, this adaptation method has no indications for inactive constraints $A_{\mA_{\cup_{l-1}}}$ as long as the trust region radius is larger than zero (which is the case for $\rho_0>0$). Otherwise, an infeasible inactive constraint may arise which would lead to solver failure since the log function of the IPM log-barrier is not defined for negative values of the slack variables.

Conflicts can arise between overall accepted steps on the HSF level $l$ and potential step rejection on a lower priority level $i > l$. We introduce a filter inertia that delays trust region increases on lower levels. Only if a certain number of steps of level $i$ is accepted (without step rejections in-between), the trust region is increased. Otherwise, we observed oscillations in the trust region radius due to a repeated interchange between step acceptance and rejection on level $i$. 

We observed that too stark differences in the trust region radii lead to numerical instability, especially in the case of BFGS Hessian approximations. This may be explained by the direct dependency on $\Delta x$ whose `conditioning' (difference between maximum and minimum absolute value) is directly influenced by the variable wise trust region adaptation. We therefore adapt the trust region radii of level $j = l+1,\dots,p$ in the following way ($l$ is the level the HSF is currently working on)
\begin{equation}
	\rho_j \leftarrow \max(\rho_{l}/\kappa, \max(\chi^2, \rho_j / 2)) \quad\text{for}\quad j = l+1,\dots,p
\end{equation}
$\rho_{l}$ is the trust region radius of the HSF level $l$. $\kappa = 100$ is a stability factor. $\chi$ is the step threshold. For good measure we also do not increase the trust region radii of lower levels above the trust region radius $\rho_l$ of the current HSF level $l$.

The nullspace trust region adaptation method has no indications for the  convergence property of the HSF since it takes place in the nullspace of the current HSF level $l$.

\subsubsection{Removing primal sub-steps of rejected levels}

Another possibility is to remove primal sub-steps of $\Delta x$ that correspond to rejected priority levels lower than the current (accepted) HSF level $l$. If the filter of a lower priority level $j>l$ determines that the current step is not acceptable, the primal sub-component $\Delta x^j$ is removed from $\Delta x = \sum_{l=0}^{p} \Delta x^l$. However, if a level $j$ is removed from $\Delta x$, we also need to remove the other sub-steps $j+1$ to $p$ since they would not be true primal nullspace steps of levels $1$ to $j$ anymore. We did not observe any improvement in iteration numbers of the HSF and therefore do not further address this method throughout the remainder of the article.

\section{Sparse Hierarchical Least-squares programming}
\label{sec:turnback}

While above we have outlined our HSF to find local solutions of~\ref{eq:NL-HLSP}'s, we have not further addressed the resolution of the~\ref{eq:hlsp} sub-problems.
Our previously reported solver $\mathcal{N}$IPM-HLSP~\citep{pfeiffer2021} does not specifically take sparsity of the underlying HLSP into account. This is problematic in the case of discrete optimal control since the computational complexity of resolving the HLSP would increase cubically with the control horizon~\citep{wangboyd2010}. 
We therefore propose a sparse hierarchical IPM solver based on the reduced Hessian formulation which we refer to as s-$\mathcal{N}$IPM-HLSP. Critically, it preserves the bands of the constraint matrices $A$ during the computation of the nullspace basis by means of the turnback algorithm~\citep{berry1985} and therefore maintains linear complexity in the control horizon length.
The turnback algorithm is based on the observation that the individual columns of the nullspace basis $Z$ of a matrix $A$ (with $AZ=0$) need not to be null-vectors with respect to all columns of $A$ but only a linearly independent subset of it. The main algorithmic step is therefore to identify these subsets by augmenting columns of $A$ into a sub-matrix until a linearly dependent column is added (and at which point the linearly independent column subset has been identified). It can be observed that these subsets often overlap. Our implementation of the turnback algorithm takes advantage of this circumstance by reusing previously computed subsets. While this is only a detail within the turnback algorithm,  to the best of our knowledge we have not seen it being documented in previous works (along with further implementation details that we give here). This leads to significant computational speedup (around 100\% depending on the problem constellation).

\subsection{A recycling implementation of the turnback algorithm}

The turnback algorithm~\citep{topcu1979} first computes a rank-revealing decomposition of the matrix $A$ in order to determine the rank of the nullspace basis. We rely on the rank revealing LU decomposition (a turnback algorithm based on the QR decomposition has been proposed~\citep{kaneko1982}) given as follows
\begin{equation}
	A =P^TLU Q^T=  P^TL\BIN U_1 & U_2 \\ 0_{(m-r) \times r} & 0_{(m-r) \times (n-r)}\BOUT Q^T
\end{equation}
$r$ is the rank of the matrix $A$.
$P\in\mathbb{R}^{m\times m}$ and $Q\in\mathbb{R}^{n\times n}$ are permutation matrices. $L\in\mathbb{R}^{m\times m}$ is a full-rank lower triangular matrix. $U\in\mathbb{R}^{m\times m}$ is composed of the
upper triangular matrix $U_1\in\mathbb{R}^{r\times r}$ and the regular matrix $U_2\in\mathbb{R}^{r\times n-r}$ which may expose some bands if the original matrix $A$ is structured. 
A basis of the nullspace of the matrix $A$ can be constructed as follows
\begin{equation}
	Z = Q  \BIN -U_1^{-1}U_2 \\ I \BOUT
	\label{eq:luns}
\end{equation}
If $A$ is structured and is banded, this is reflected in the matrix $U_2$. The sparsity of the matrix $Z$ is thereby influenced by $U_2$ but not fully determined as described in~\citep{berry1985}. A simple example would be 
\begin{align}
	Z = Q 
	\BIN  -\BIN 1 & 1  \\
	0 & 1 \BOUT 	&	\BIN 1 & 0  \\
	0 & 1 \BOUT \\ 1 & 0 \\ 0 & 1\BOUT = Q\BIN -1 & -1 \\ 0 & -1 \\ 1 & 0 \\ 0 & 1\BOUT
\end{align}
which does not preserve the banded structure of $U_2 = \BIN 1  & 0 \\ 0 & 1 \BOUT$. We therefore rely on the algorithm described in Alg.~\ref{alg:findb} to determine the index vector $b\in\mathbb{R}^{n-r}$ which indicates the first  or last  non-zero entry of each column of $Z$. The former case refers to the right-looking and the latter one to the left-looking turnback algorithm (which we refer to as `direction' ($d$) in the following).

\begin{algorithm}[t!]
	\caption{Routine for determining index vectors $b$ and $p_v$}\label{alg:findb}
	\begin{algorithmic}[1]
		\Statex \textbf{Input:} {$A\in\mathbb{R}^{m\times n}$}
		\Statex \textbf{Output:} $b$, $p_v$, $d$, $r$
		\State $r,P,L,U,Q\leftarrow {\tt LU}(A)$
		\State $b_{\text{right}} \leftarrow$  Rows of first $nnz$ in each column of $Q\BIN U_2^T & I\BOUT^T$
		\If{double entries in $b_{\text{right}}$}
		\State $b_{\text{left}}\leftarrow$  Rows of last $nnz$ in each column of $Q\BIN U_2^T & I\BOUT^T$	
		\EndIf
		\State $b,d\leftarrow$ minimum number of double entries ($b_{\text{right}}$, $b_{\text{left}}$)
		\State $p_v \leftarrow$  Rows of $I$ of $Q\BIN U_2^T & I\BOUT^T$
		\If{$d$ = `right'}
		\State ${\tt sortAscending}$($b$)
		\State $b[0] = 0$
		\ElsIf{$d$ = `left'}
		\State ${\tt sortDescending}$($b$)
		\State $b[0] = n-1$			
		\EndIf
		\State Arrange $b$ such that $p_{v,i} \in \BIN b_i, b_{i+1}\BOUT$ for $i=1,\dots,n-r$
		\State \Return $b$, $p_v$, $d$, $r$
	\end{algorithmic}
\end{algorithm}	

First we compute the rank-revealing LU decomposition of $A$ with rank $r$ (routine ${\tt LU}$).
Using the permutation $Q \BIN U_2^T & I\BOUT^T$, we obtain a rough estimate of $b$ (as without  evaluating $U_1^{-1}\BIN U_2^T & I\BOUT^T$ we can not fully know the sparsity structure of $Z$). Furthermore, this determines the pivot column vector $p_v$ (the columns of $A$ that correspond to $I$ of $\BIN U_2^T & I\BOUT^T$).
The vector $b$ is then checked for double entries. If there are double entries, the direction is switched and $b$ is recomputed.
Afterwards, the direction with the least number of double entries is chosen. The vector $b$ is sorted from smallest to largest for the right-looking algorithm by ${\tt sortAscending}$ (for left-looking, from largest to smallest by ${\tt sortDescending}$). If not already so, the first entry of $b$ is set to zero for the right-looking algorithm (for left-looking, set the first entry to $n-1$).
Finally, $b$ is arranged (possible changing values) such that the pivot columns are contained within their own column brackets ($p_{v,i} \in \BIN b_i, b_{i+1}\BOUT$ where $i=1,\dots,r$ indicates the $i$-th entry of $p_v$ or $b$).

We now proceed with the actual turnback algorithm by looping through the ordered vector $b$ with the current index $b_i$ ($i=1,\dots,n-r$). The algorithmic details are given in Alg.~\ref{alg:recturnback}. The symbol $\pm$ makes the distinction between right ($+$) and left looking case ($-$).

\begin{algorithm}[t!]
	\caption{Recycling turnback algorithm}\label{alg:recturnback}
	\begin{algorithmic}[1]
		\Statex \textbf{Input:} $A\in\mathbb{R}^{m\times n}$
		\Statex \textbf{Output:} $Z$
		\State $b,p_v,d,r\leftarrow$ {Alg.}~\ref{alg:findb} with $A$
		\State $Z \in \mathbb{R}^{n\times n-r}$
		\State $b_0 = 0$
		\State $j = b_1$
		\For{$i=1,\dots,n-r$}
		\State $r_{\max} = {\tt getMaxR}(A,b_i,d)$
		\State $c=b_i \pm\max(r_{\max}, \vert b_{i+1} - b_i\vert)$
		\State $o = (\vert c\pm b_i\vert) / (\vert j - b_{i-1}\vert)$
		\If{$i=1$ or $o < \hat{o}$} 
		\State $r, P,L,U,Q = {\tt LU}(A^{b_{i}:c})$
		\Else 
		\State $r, P,L,U,Q = {\tt updateLU}(A^{b_{i}:c})$
		\EndIf
		\State $j = c$, linDep = $\text{False}$
		\While{$1 \leq j \leq n$}
		\If{linDep}
		\If{${p_{v,i}}\notin U_2$}
		\State $\hat{r} = r$
		\State $r = {\tt remColLU}(p_{v,i},A)$
		\If{$r = \hat{r}$} 
		\State $r,P,L,U,Q = {\tt addColLU}(p_{v,i},A)$
		\Else
		\State $P,L,U,Q = {\tt addColLU}^*(p_{v,i},A)$
		\EndIf
		\Else
		\State $z = U_1^{-1}U_{2,p_{v,i}}$
		\State $Z_i = \BIN z^T & 0\BOUT$, $Z_{i,p_{v,i}} = 1$, $Z_i = QZ_i$
		\State break
		\EndIf
		\EndIf
		\While{$j\in p_v$}
		\State $j\leftarrow j\pm1$
		\EndWhile
		\State $\hat{r} = r$
		\State  $r,P,L,U,Q = {\tt addColLU}(A,j)$
		\IIf{$r = \hat{r}$} linDep = $\text{True}$
		\EndWhile
		\EndFor
		\State \Return $Z$
	\end{algorithmic}
\end{algorithm}

First, the maximally achievable rank $r_{\max}$ is computed by the routine ${\tt getMaxR}$. This searches for pivot rows in the columns of $A$ beginning from the current column $b_i$ going to the right or left and gives a rough estimate on how many columns are required for linear dependency. A pivot row is valid if it has not been chosen before.
With this information the factorization of $A^{b_{i}:c}$ from the current column $b_i$ to column $c=b_i\pm\max(r_m, b_{i+1} - b_i)$ is computed. The term $b_{i+1}-b_i$ ensures that the current pivot column $p_{v,i}$ is contained within the current column subset. If there is sufficient overlap  ($o \geq \hat{o}$ with the constant $\hat{o}=0.5$) between the previous factorization of $A^{b_{i-1}:j}$ and the desired one $A^{b_{i}:j=b_{i}+c}$, a series of column removals (on the left / right of $b_i$; routine ${\tt remColLU}$) and additions (on the right / left of $j$; routine ${\tt addColLU}$) is conducted (${\tt updateLU}$). Note that for an unordered $b$ the probability of overlap is reduced or non-existent since the two columns $b_i$ and $b_{i-1}$ might be too far away from each other.
The algorithm then keeps adding non-pivot columns on the right / left of $j$ (and consequently increases / decreases $j$)  until a linear dependent column is found (column addition does not lead to rank increase).
If the permuted pivot column $p_{v,i}$ is already contained within $U_2$ ($p_{v,i}\in U_2$),
the pivot column $p_{v,i}$ is removed from the decomposition.
If the rank stays the same ($r=\hat{r}$), re-add the column in a non-permuting fashion such that it ends up in $U_2$.
If the rank decreases, re-add the column in the normal permuting fashion and keep adding further columns $j$.
Once the pivot column is contained within $U_2$ as $u_2\coloneqq U_{2,p_{v,i}}$, the null-vector $z_1 = U_1^{-1} u_2$ is computed. We set the $i$-th (or its corresponding permutation resulting from sorting $b$) column  of $Z_{i} = \BIN z_1^T & 0\BOUT^T$ and the $p_{v,i}$-th entry of the column to $Z_{i, p_{v,i}}=1$. Finally, we apply the permutation $Z_{i} = QZ_{i}$.

\subsubsection{Full rank requirement}
The resulting nullspace basis is full-rank due to singular usage of the pivot-columns during the column augmentation~\citep{berry1985}. Pivot column $i$ is only used once during the turnback process for the current nullspace basis column $b_i$, and its corresponding $p_{v,i}$-th entry is set to 1. This means that the resulting nullspace basis has the structure
$Z = \hat{Q}  \BIN T^T & I \BOUT^T$
which is clearly full column rank. $\hat{Q}$ is some permutation matrix and $T$ represents the turnback nullspace columns $z$.
%This fact is preserved even if columns are added on the left for numerical stability since in this case no pivot-columns from other columns of $b$ are picked. However, sparsity is destroyed.

\subsubsection{Conditioning of the nullspace basis}
For numerical stability, it is desirable that the resulting nullspace basis is well conditioned. We incorporate an approximate condition number maintenance by measuring the smallest to the largest absolute value of the factor $U_2$ (which needs to be inverted). This leads to continued column augmentation to the current sub-matrix even if linear dependency has already been detected (but with high approximate condition number of $U_2$).

\subsubsection{Growth of $L$}
One disadvantage of conducting a large number of updates is that the factor $L$ representing the Gaussian elimination steps of the updates grows linearly with the number of updates (with some pre-factor depending on the magnitude of the resulting spike of inserting or removing a column). LUSOL~\citep{gill1987}  (the library we use to conduct the rank-revealing LU decomposition) does not consider the sparsity of the new column $c_{\text{new}}$ to be added. During the forward substitution step $L \hat{c} = c_{\text{new}}$ of the Barthels-Golub update~\citep{gill1987}, we avoid zero operations by only considering the rows of $L$ which correspond to rows below and including the first non-zero entry of $c_{\text{new}}$. Another possibility would be to regularly recompute the LU decomposition of the current sub-matrix to dispose of excessively large factors $L$. In future work we would like to implement truly sparse LU column updates by using graph theory, for example as described in~\citep{demmel1995}.

\section{Evaluation}
\label{sec:eval}

We implemented both S-HLSP with trust region and HSF and s-$\mathcal{N}$IPM-HLSP in C++ based on the Eigen library~\citep{eigenweb}. The recycling turnback algorithm is based on the rank-revealing sparse LU factorization Fortran library LUSOL~\citep{gill1987} with C~interface (https://github.com/nwh/lusol). All column additions and deletions are conducted by the column replacement routine ${\tt lu8rpc}$, maintaining one factorization of the dimensions of the matrix to be decomposed. The simulations are run on an 11th Gen Intel Core i7-11800H \@ 2.30GHz $\times$ 16 with 23GB RAM.

We first evaluate the computational efficiency of the recycling turnback algorithm, see Sec.~\ref{eval:turnback}. Secondly, the convergence property of the S-HLSP is tested on non-convex~\ref{eq:NL-HLSP}'s with optimization test functions (see Sec.~\ref{eval:nlpt}), inverse kinematics of the humanoid robot HRP-2 (Sec.~\ref{eval:ik}) and prioritized discrete optimal control problems for the robot dog Solo12~\citep{grimminger2020} (Sec.~\ref{eval:trajopt}). We compare our HLSP solvers s-$\mathcal{N}$IPM-HLSP (turnback algorithm) and s-$\mathcal{N}$IPM-HLSP (LU-NS) (\eqref{eq:luns} with LU decomposition) based on the reduced Hessian formulation to the hierarchical versions of H-GUROBI~\citep{gurobi}, H-MOSEK~\citep{mosek}, H-PIQP~\citep{piqp} and H-OSQP~\citep{osqp}. All these solvers are sparse and resolve the~\ref{eq:hlsp} directly. The `H-' indicates that the computation times not only include the solver times but also the active and inactive set assembly. Thereby, all solvers including s-$\mathcal{N}$IPM-HLSP are based on the same hierarchical solver framework with comparable computational complexity. Note that we do not evaluate tailored QP solvers for discrete optimal control based on the Riccati recursion~\citep{hpipm}, since they only allow the use of `forward' dynamics (explicit computation of states, which is less computationally efficient than `inverse' dynamics for the explicit computation of controls~\citep{carpentier2018}). In the following, the number of non-zeros ($nnz$) refers to the sum of non-zeros of the constraint matrices $A_{\mathbb{E}_l}$, $A_{\mathbb{I}_l}$, $A_{\mA_{l-1}}$ and $A_{\mI_{l-1}}$ or its projected counterparts $A_lN_{l-1}$ (with $nnz(A_{\mA_{l-1}}N_{l-1})=0$) for s-$\mathcal{N}$IPM-HLSP. This number may differ from the actual number of non-zeros that are handled by the respective solvers, for example during the factorization of the IPM normal form. For all examples, the reported computation times only refer to the required time of resolving the~\ref{eq:hlsp} sub-problems. Computation times for example for data allocation / setting or Jacobian and Hessian computations are excluded since they either depend on the computational efficiency of external libraries and are not the focus of this work.

In all examples, the trust region constraint is defined on $l=0$.
The step and Hessian augmentation thresholds $\chi$ and $\epsilon$ are hand-picked for each problem in order to achieve fast convergence. We therefore see a necessity to further investigate their influence on the accuracy and optimality of the solutions and to design automatic selection methods.

\subsection{A recycling Turnback LU for sparse HLSP}
\label{eval:turnback}

\begin{table*}[htp!]
	\centering
	\resizebox{1.\columnwidth}{!}	{%
		\begin{tabular}{@{} cccccccccccccccc @{}}  
			\toprule
			& & & & & & & \multicolumn{5}{c}{Turnback-NS} & \multicolumn{3}{c}{LU-NS} & LU\\ 			
			\cmidrule(lr){7-12} \cmidrule(lr){13-15} \cmidrule(lr){16-16}
			$n_x$ & $n_u$ & $T$ & $n$ & $nnz(A^TA)$ & $d(A^TA)$ &$nnz(Z^TZ)$  & $d(Z^TZ)$ & col+ & col- &  $t$ [ms] & $t^*$ [ms] &  $nnz(Z^TZ)$ & $d(Z^TZ)$ &$t$ [ms] & $t$ [ms]\\
			\cmidrule(lr){1-6}\cmidrule(lr){7-12} \cmidrule(lr){13-15} \cmidrule(lr){16-16}
			12 & 3 & 10 & 150 & 5358 & 0.24 & 594 & 0.66 & 360 & 197 & 1.08 & 1.94 & 900 & 1 & 0.48 & 0.13\\
			12 & 3 & 20 & 300 & 6996 & 0.22 & 1344 & 0.37 & 480 & 287 & 1.74 & 4.67 & 3600 & 1 & 1.6 & 0.31 \\
			12 & 3 & 30 & 450 & 8814 & 0.20 & 2094 & 0.26 & 600 & 377 & 2.19 & 7.30 & 8100 & 1 & 1.68 & 0.17\\
			12 & 3 & 40 & 500 & 22908 & 0.06 & 2844 & 0.20 & 1440 & 827 & 4.81 & 10.55 & 14400 & 1 & 7.44 & 0.33 \\
			12 & 3 & 50 & 650 & 28758 & 0.05 & 3594 & 0.16 & 1800 & 1037 & 6.55 & 13.61 & 22500 & 1 & 7.54 & 0.42 \\
			\cmidrule(lr){1-6}\cmidrule(lr){7-12} \cmidrule(lr){13-15} \cmidrule(lr){16-16}
			12 & 6 & 10 & 180 & 6996 & 0.22 & 1344 & 0.37 & 480 & 287 & 1.63 & 3.79 & 3600 & 1 & 0.70 & 0.13\\
			12 & 6 & 20 & 360 & 14556 & 0.11 & 2844 & 0.20 & 960 & 587 & 3.45 & 6.74 & 14400 & 1 & 1.91 & 0.27\\
			12 & 6 & 30 & 540 & 22116 & 0.08 & 4344 & 0.13 & 1440 & 887 & 6.13 & 10.40 & 32400 & 1 & 5.34 & 0.35\\
			12 & 6 & 40 & 720 & 29676 & 0.06 & 5844 & 0.10 & 1920 & 1187 & 8.73 & 15.00 & 57600 & 1 & 15.07 & 0.38\\
			\cmidrule(lr){1-6}\cmidrule(lr){7-12} \cmidrule(lr){13-15} \cmidrule(lr){16-16}
			12 & 9 & 10 & 210 & 8814 & 0.20 & 2094 & 0.26 & 600 & 377 & 1.80 & 3.45 & 8100 & 1 & 0.80 & 0.16\\
			12 & 9 & 20 & 420 & 18264 & 0.10 & 4344 & 0.13 & 1200 & 767 & 4.77 & 8.00 & 32400 & 1 & 2.84 & 0.26\\
			12 & 9 & 30 & 630 & 27714 & 0.07 & 6594 & 0.09 & 1800 & 1157 & 7.83 & 13.30 & 72900 & 1 & 7.78 & 0.38\\
			\cmidrule(lr){1-6}\cmidrule(lr){7-12} \cmidrule(lr){13-15} \cmidrule(lr){16-16}
			12 & 12 & 10 & 240 & 10812 & 0.19 & 2844 & 0.20 & 720 & 467 & 2.36 & 5.79 & 14400 & 1 & 0.95 & 0.19\\
			12 & 12 & 20 & 480 & 22332 & 0.10 & 5844 & 0.10 & 1440 & 947 & 5.05 & 9.89 & 57600 & 1 & 4.10 & 0.29\\
			12 & 12 & 30 & 720 & 33852 & 0.07 & 8844 & 0.07 & 2160 & 1427 & 9.75 & 17.31 & 129600 & 1 & 9.47 & 0.40\\
			\cmidrule(lr){1-6}\cmidrule(lr){7-12} \cmidrule(lr){13-15} \cmidrule(lr){16-16}
			12 & 15 & 10 & 270 & 12990 & 0.18 & 3596 & 0.16 & 840 & 557 & 3.70 & 4.90 & 22500 & 1 & 1.50 & 0.17\\
			12 & 15 & 20 & 540 & 26760 & 0.09 & 7346 & 0.08 & 1680 & 1127 & 7.50 & 11.80 & 90000 & 1 & 7.36 & 0.53\\
			\cmidrule(lr){1-6}\cmidrule(lr){7-12} \cmidrule(lr){13-15} \cmidrule(lr){16-16}
			12 & 18 & 10 & 300 & 15348 & 0.17 & 4344 & 0.13 & 960 & 647 & 2.97 & 5.73 & 32400 & 1 & 1.49 & 0.19\\
			12 & 18 & 20 & 600 & 31548 & 0.09 & 8844 & 0.07 & 1920 & 1307 & 8.64 & 14.32 & 129600 & 1 & 8.91 & 0.34\\
			\bottomrule
		\end{tabular}
	}
	\caption{Results for the turnback LU algorithm for the computation of sparse nullspace bases of banded matrices. $nnz$ are the number of non-zeros of a matrix. $d$ is the density of a matrix. col+ and col- are the number of column additions and removals, respectively. $t^*$ is the computation time for the non-recycling turnback algorithm.}
	\label{tab:turnback}
\end{table*}

We apply the turnback algorithm to matrices with band structure of the following form
\begin{align}
	&A\in\mathbb{R}^{Tn_s\times T(n_s+n_c)} =\\
	&\BIN 
	C\in\mathbb{R}^{n_x\times n_u} & I & \cdots & 0 & 0 & 0\\
	0 & S\in\mathbb{R}^{n_x\times n_x} & \cdots & 0 & 0 & 0 \\
	\vdots & \vdots & \ddots & \vdots & \vdots & \vdots \\
	0 &  0 & \cdots & S & C & I\\
	\BOUT\nonumber
\end{align}
This is a common structure of constraint matrices $A$ resulting from the linearization of system dynamics of the form $s_{k+1} = f(s_k, c_{k})$ with $k=1,\dots ,T$. The overall variable vector is $x = \BIN c_1^T & s_2^T & \dots & c_{T-1}^T & s_T^T \BOUT^T$. $S$ is the Jacobian of $f$ with respect to the state vector $s\in\mathbb{R}^{n_s}$ and $C$ the one corresponding to the control vector $c\in\mathbb{R}^{n_c}$. Both $S$ and $C$ are set as randomized dense full-rank matrices and assumed to be constant over the control horizon $T$.

The results are given in Tab.~\ref{tab:turnback} for $n_s = 12$ and $n_c = 3,6,9,12,15,18$ and $T = 10,20,30,40,50$. Due to memory limitations all examples are limited to matrices $A$ with under 35000 non-zeros of their corresponding normal form $A^TA$. 
In all cases, the recycling turnback algorithm (`Turnback-NS') manages to compute a nullspace basis in under~10~ms. In contrast, the computation times $t^*$ for the non-recycling turnback algorithm, which computes a fresh LU decomposition for each column sub-set, requires at least more than 30\% more time in all cases. For further reference, the last column indicates the computation times of solely the LU decomposition, which are about a magnitude lower. The number of column updates are given as col+ (column addition, also contains the number of columns $n$ treated in the initial LU decomposition) and col- (column deletion). While column deletions are cheaply handled by LUSOL, it does not consider sparsity of columns to be added. This could be avoided for example by developing supernodal rank-revealing LU decompositions. Still, and especially for larger problems, the computation times are comparable to the ones of the LU based NS (`LU-NS'~\eqref{eq:luns}). This is due to the higher density of the resulting nullspace bases $Z$ with a high number of non-zeros (at least upper or lower triangular instead of bands for Turnback-NS). 

We further report the density measure $d(M) = nnz(M) / (mn)$ of a matrix $M\in\mathbb{R}^{m\times n}$ as the number of non-zeros ($nnz$) by the matrix dimensions. The density of the normal form projection $Z^TZ$ demonstrates the capability of the turnback algorithm to maintain bands of the matrix $A$. For all cases the number of non-zeros in $Z^TZ$ (which is necessary for~\eqref{eq:HLSPNeNmethod}) is reduced significantly compared to $A^TA$. This is in the sense of nullspace based HLSP resolution methods where a `variable reduction', or in the case of sparse programming `non-zero reduction', is desired. At the same time for LU-NS with their triangular structure of $Z$, the resulting normal form $Z^TZ$ is fully dense ($d(Z^TZ)=1$) for all cases with a very high number of non-zeros (and oftentimes higher than the original $nnz(A^TA)$). For the case of $n_s=12$, $n_c=18$ and $T=20$, LU-NS has 68 times more non-zero entries than Turnback-NS. This stresses the importance of considering `global' sparsity patterns like bands in order to maintain computational tractability of the~\ref{eq:hlsp} sub-problem resolution.

\subsection{Solving NL-HLSP's}

\subsubsection{Hierarchy with disk, Rosenbrock and Himmelblau's constraints}
\label{eval:nlpt}

\begin{table*}[htp!]
	\centering
	\resizebox{1.\columnwidth}{!}	{%
		\begin{tabular}{@{} ccccccccccc @{}}  
			\toprule
			& & & & \multicolumn{3}{c}{Newon's method} & \multicolumn{3}{c}{Quasi-Newton method (BFGS)} \\ 			
			\cmidrule(lr){5-7}	\cmidrule(lr){8-10}	
			$l$ & & $f_l(x)\leqq v_l$ & $\Vert v_l^* \Vert_2$ & Iter. & Ref. Iter. & $\rho_{\max,l}$ & Iter. & Ref. Iter. & $\rho_{\max,l}$\\
			\midrule
			1 & Disk ineq. & $x_1^2 + x_2^2 - 1.9 \leq v_1$ & 0 & 7 & 3  & 10 & 7 & 3 & 10 \\
			2 & Ros. eq. & $(1-x_1)^2 + 100(x_2 - x_1^2)^2 = v_2$ & $2.9\cdot 10^{-4}$ & 21 & 77  & 10 &  37 & 71 & 10 \\
			3 & Disk eq. & $x_1^2 + x_2^2 - 0.9 = v_3$& 1 & 1 & 7  & 4 &  1 & 7 & 4 \\
			4 & Disk eq. & $x_2^2 + x_3^2 - 1 = v_4$& $8.3\cdot 10^{-17}$ & 1 & 7  & 10 &  1 & 7 & 8 \\
			5 & Disk ineq. & $x_4^2 + x_5^2 + 1 \leq v_5$& 1 & 1 & 13  & 4 &  1 & 13 & 10 \\
			6 & Disk eq. & $x_6^2 + x_7^2 + x_8^2 - 4 = v_6$& $1.6\cdot 10^{-10}$  & 1 & 7  & 10 &  11 & 7 & 10 \\
			7 & Ros. eq. & $(1-x_6)^2 + 100(x_7 - x_6^2)^2 = v_7$& $7.3\cdot 10^{-10}$ & 40 & 77  & 10  &  19 & 71 & 10 \\
			8 & Him. eq. &  ($x_{_9}^2 + x_{10} - 11)^2 + (x_9 + x_{10}^2 - 7)^2= v_8$& $2.1\cdot 10^{-10}$ & 1 & 21  & 10 &  7 & 21 & 10 \\
			9 & Reg. eq. & $x_{1:10} = v_9$& 4.36 & 1 & 1  & 4 &  1 & 1 & 10 \\
			\midrule
			$\Sigma$ & & & & 74 & 213 & & 85 & 201 & \\
			\bottomrule
		\end{tabular}
	}
	\caption{Non-linear test functions, s-$\mathcal{N}$IPM-HLSP: a~\ref{eq:NL-HLSP} with $p=9$ and $n=10$ composed of disk, Rosenbrock (Ros.), Himmelblau's (Him.), and regularization (Reg.) equality (eq.) and inequality (ineq.) constraints.}
	\label{tab:p8nl}
\end{table*}

\begin{figure}[htp!]
	\includegraphics[width=0.8\columnwidth]{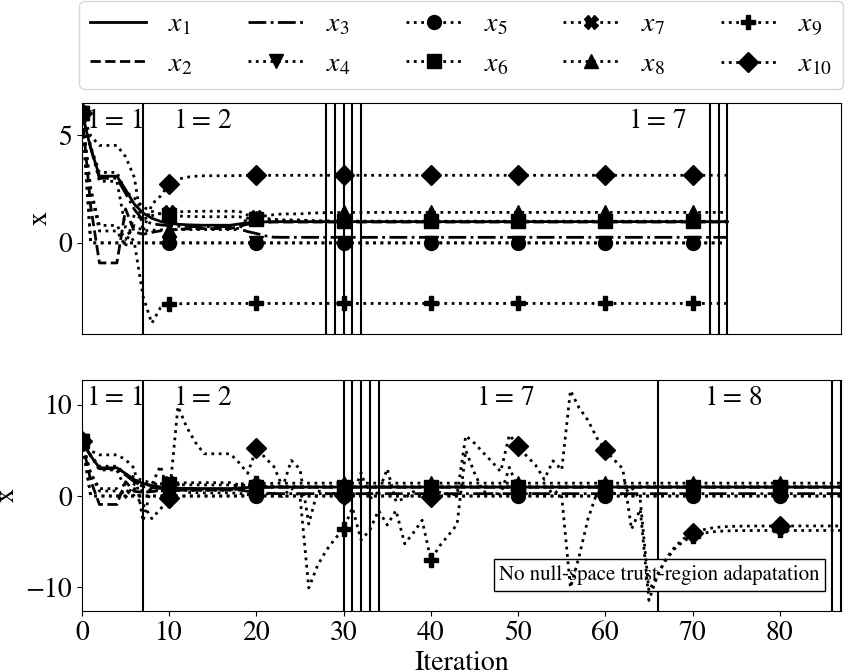}
	\centering
	\caption{Non-linear test functions, Newton's method, s-$\mathcal{N}$IPM-HLSP: Primal $x$ over S-HLSP iteration with (top) and without (bottom) nullspace trust-region adaptation. The black vertical lines indicate the current hierarchy level being resolved by the HSF.}
	\label{fig:nonlinx}
\end{figure}

\begin{figure}[htp!]
	\includegraphics[width=0.8\columnwidth]{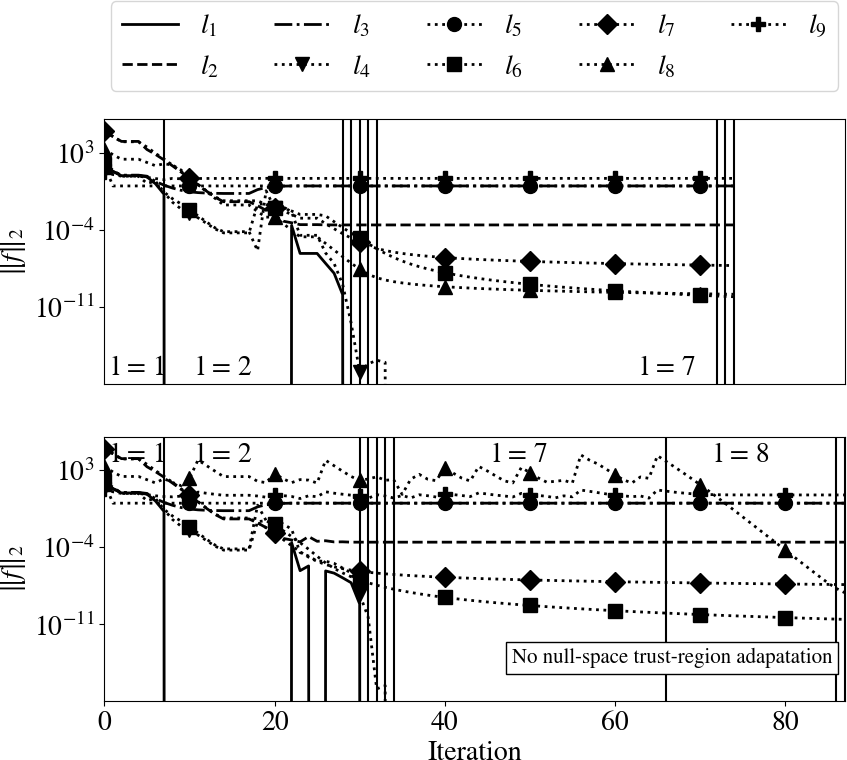}
	\centering
	\caption{Non-linear test functions, Newton's method, s-$\mathcal{N}$IPM-HLSP: error $\Vert f \Vert_2$ over S-HLSP iteration.}
	\label{fig:nonline}
\end{figure}

\begin{figure}[htp!]
	\includegraphics[width=1\columnwidth]{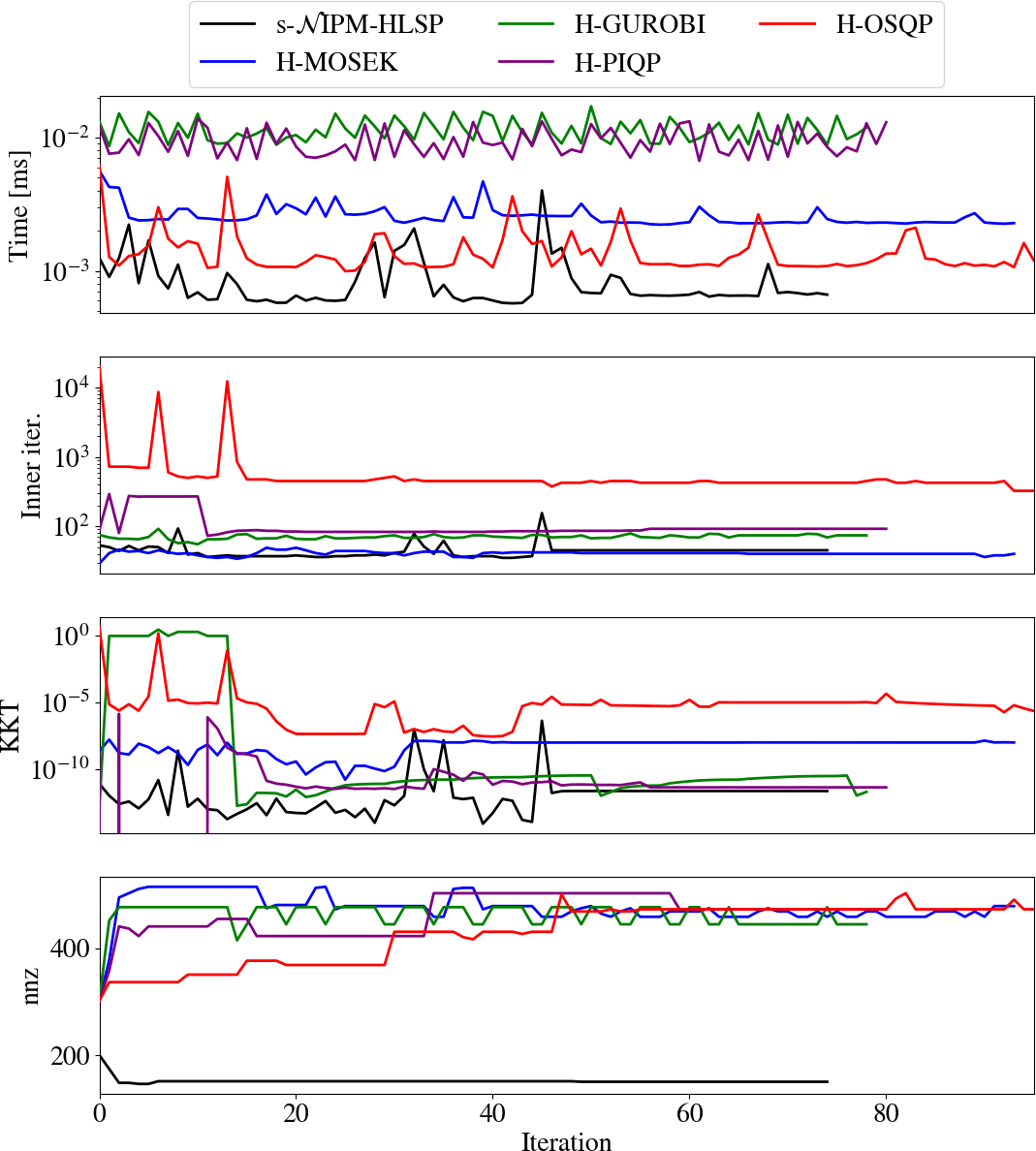}
	\centering
	\caption{Non-linear test functions, Newton's method: computation times, number of inner iterations, KKT residuals and overall number of non-zeros handled throughout the whole hierarchy  over S-HLSP outer iteration for the different HLSP sub-solvers.}
	\label{fig:nonlinsolverdata}
\end{figure}

\begin{figure}[htp!]
	\includegraphics[width=1\columnwidth]{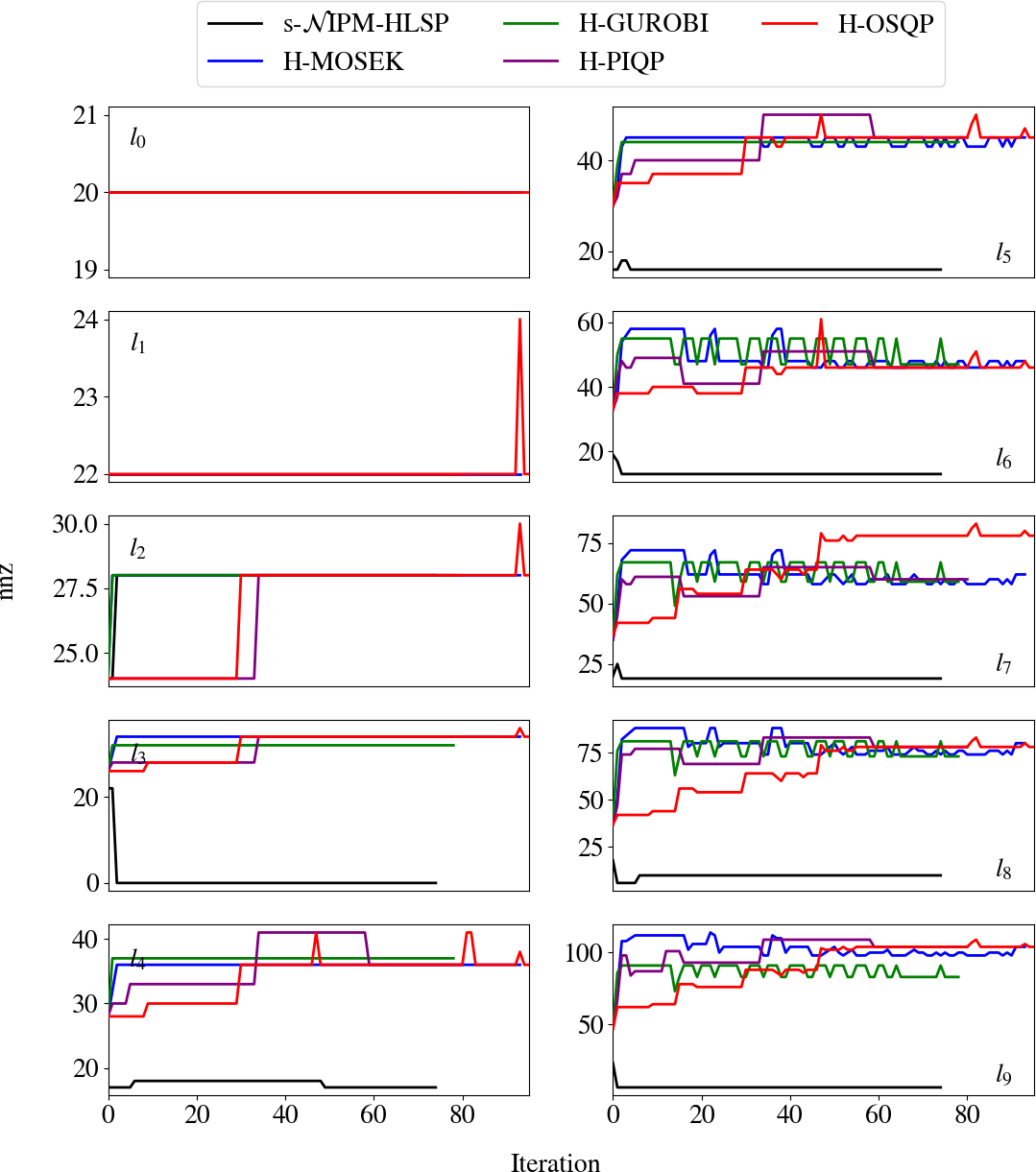}
	\centering
	\caption{Non-linear test functions, Newton's method: number of non-zeros on each priority levels for the different HLSP solvers.}
	\label{fig:nonlinnnz}
\end{figure}

We consider following~\ref{eq:NL-HLSP} given in Tab.~\ref{tab:p8nl} with $p=9$ and $n=10$. The constraints consist of quadratic disk constraints and fourth order Rosenbrock and Himmelblau's functions. 
The purpose of this hierarchy is to demonstrate capabilities of the S-HLSP in handling the following cases:
\begin{itemize}
	\item Infeasible initial point $x_0 = \BIN 6,\dots,6\BOUT$ to inequalities ($l=1, 5$) and all equalities.
	\item Algorithmic conflict between infeasible equality ($l=2$) and feasible inequality ($l=1$).
	\item Algorithmic conflict between infeasible equalities ($l= 2, 3$).
	\item Feasible solution of equality (4) in nullspace of higher priority equalities and active inequality ($l=1, 2, 3$).
	\item Infeasible inequality ($l=5$).
	\item Maintenance of optimality of singular constraint ($l= 5, 7$) during resolution of lower priority levels.
	\item Feasible solution of equality ($l=7$) in nullspace of feasible equality ($l=6$).
	\item Convergence in presence of different non-linear functions with different trust region radius requirements regarding their linear models (disk, Rosenbrock and Himmelblau's constraints).
	\item One step convergence of linear constraint ($l=9$).
\end{itemize}
Singularity refers to singularity of the Jacobians $J$ of the linearized constraints in the HLSP at a given $x$. Algorithmic conflict refers to singularity of projected constraints into the nullspace of higher priority levels while there is a conflict between the levels. It has been demonstrated in~\citep{pfeiffer2018,pfeiffer2023} that Newton's or Quasi-Newton methods are an efficient countermeasure by using second order information (and not the Gauss-Newton algorithm). The problem can be considered dense with some sparsity resulting from the variable dependency of the different functions.

The results are summarized in table~\ref{tab:p8nl} with the step threshold $\chi = 1\cdot 10^{-5}$.  The solver converges in 75 steps for Newton's method (3549 inner iterations, 0.079~s) to the minimum 
$x$ = 0.983, 0.966, 0.258, 0, 0, 1.000,
1.000, 1.414, -2.805, 3.131, see Fig.~\ref{fig:nonlinx}. For reference, we denote the necessary S-HLSP iterations to resolve each respective level in combination with the trust region constraint (ref. iter.). The disk inequality on the first level converges to a feasible point within 8 iterations. The Rosenbrock equation on the second level then converges to the infeasible point $x_{1:2} = \BIN 0.983 & 0.966\BOUT$ with $\Vert v_l^*\Vert_2 = 2.9\cdot 10^{-4}$ due to conflict with the inequality constraint on the first level. This means that the constraint is augmented with second order information. Due to the blockage of the two variables, the disk equality on level $3$ has no effect on this constraint (and converges within one iteration). Since one of the variables $x_3$ of the disk equality on level 4 is free, the constraint converges to the feasible point $x_{2:3}=\BIN 0.966 & 0.258\BOUT$. 
The disk inequality on level $5$ is infeasible but converges at the relaxed point $x_{4:5} = \BIN 0 & 0\BOUT$ with $\Vert v_5^*\Vert_2 = 1$. 
The corresponding constraint Jacobian is singular but converges cleanly with trust region radius $\rho_{\max,l} = 4$, meaning that the given second order information is accurate. On level $7$, the Rosenbrock function converges to the feasible point $x_{6:7} = \BIN 1 & 1\BOUT$ in the nullspace of the feasible point $x_{6:8} = \BIN 1 & 1 & 1.414\BOUT$ of the disk constraint on level $6$. The Himmelblau's function converges to the feasible point $x_{9:10} = \BIN -2.805 & 3.131 \BOUT$ (the feasible point $x_{9:10} = \BIN -3.779 & -283 \BOUT$ without nullspace trust region adaptation). Since at every filter step we solve the complete HLSP, the Himmelblau's function already has converged to a minimum when running the filters of the previous levels. The filter of level $8$ therefore converges within one step. The same holds for the linear regularization task on level $9$.

The error reduction over the S-HLSP iterations is given in Fig.~\ref{fig:nonline}. If the nullspace trust-region adaptation is not applied, it can be observed that the two variables $x_9$ and $x_{10}$ corresponding to the Himmelblau's function on level 8 behave highly erratic, see Fig.~\ref{fig:nonlinx}. This is due to the fact that the trust-region radii applicable during the resolution of the HSF levels 1 to 8 (Rosenbrock and disk constraint) are not appropriate for the Himmelblau's function. Therefore, convergence takes 87 iterations. This is in contrast to the HSF with nullspace trust region adaptation. Here,  during the resolution of the higher priority levels the free variables of the Himmelblau's function are already appropriately adjusted by the trust region adaptation such that the Himmelblau's task error is reduced simultaneously. This way, an acceptable solution is approximately obtained after the resolution of level 2 at S-HLSP iteration 28. This is in contrast to the case without adaptation, only leading to convergence once the HSF actually resolves level 8 containing Himmelblau's function from iteration 66 onwards.

The detailed solver times of s-$\mathcal{N}$IPM-HLSP per S-HLSP outer iteration are given in Fig.~\ref{fig:nonlinsolverdata}. As can be seen, they are about three times lower than the ones of the next best IPM solver H-MOSEK with a comparable number of inner iterations. This can be explained by the significantly lower number of non-zeros that need to be handled by our solver based on the reduced Hessian formulation. This offsets the additional computation time which is required to compute a nullspace basis of the active constraints via the turnback algorithm and to project the remaining constraints. H-OSQP is the second-fastest solver. However, the non-linear constraint residuals are significantly higher than the ones seen for the IPM based solvers. For example, the Himmelblau function on level 8 is only resolved to $1.5\cdot 10^{-5}$ (s-$\mathcal{N}$IPM-HLSP: $2.1\cdot 10^{-10}$). This is also reflected in the comparatively high KKT residual of H-OSQP.

Figure~\ref{fig:nonlinnnz} shows the number of non-zeros of the constraint matrices of each respective level. On the second level $l_2$ (Rosenbrock function), it can be observed that the number of non-zeros increases in the second outer iteration. This is due to the switch form the Gauss-Newton algorithm to the Newton's method (`Hessian augmentation'). As the Hessian is full-rank on the corresponding variables, the null-space projection of the subsequent level $l_3$ (disk constraint) vanishes entirely as the corresponding constraint occupies the same variables. s-$\mathcal{N}$IPM-HLSP does not resolve levels with empty constraint matrices $A_{\mathbb{E}_l}$ and  $A_{\mathbb{I}_l}$ and therefore $nnz=0$ of level $l_3$. Note that this assumes that the solver has successfully identified feasible ($v^*=0$, or optimal infeasible $v^*\neq0$) points of the previous priority levels. Similarly, in subsequent levels the change of number of non-zeros indicates a switch between the hierarchical Gauss-Newton algorithm and the Newton's method. 

It can be observed for all solvers except s-$\mathcal{N}$IPM-HLSP that the number of non-zeros increases with the progression through the hierarchy. This is due to the growth of number of constraints in $A_{\mA_{l-1}}$, which contains all constraints of the previous levels $1$ to $l-1$. In contrast, s-$\mathcal{N}$IPM-HLSP does not need to consider these constraints to the expense of computing appropriate nullspace bases such that $A_{\mA_{l-1}}N_{l-1}=0$. 

\subsubsection{Inverse kinematics}
\label{eval:ik}

\begin{table}[htp!]
	\centering
	\begin{tabular}{@{} ccccccc @{}}  
		\toprule
		& & &\multicolumn{2}{c}{Newon's method} & \multicolumn{2}{c}{BFGS} \\ 			
		\cmidrule(lr){4-5}	\cmidrule(lr){6-7}	
		$l$ & $f_l(x) \leqq v_l$ & $\Vert v_l^* \Vert_2$ & Iter. & $\rho_{\max,l}$ & Iter. & $\rho_{\max,l}$\\
		\midrule
		1 & J. lim. ineq. & 0 & 1 & 10 &  1 & 10 \\
		2 & LF, RF, LH eq. & $4\cdot 10^{-8}$ & 5  & 10 &  10 & 10 \\
		3 & CoM ineq. & 0 & 20  & $3.1\cdot 10^{-4}$ &  14 &  $6\cdot10^{-4}$ \\
		4 & Right hand eq. & 1.13 & 4  & $1.3\cdot 10^{-1}$ &  5  & $3.2\cdot10^{-2}$ \\
		5 & Reg. eq. & 6.73 & 0 & 10 &  0 &  10 \\
		\midrule
		$\Sigma$ & & & 30 & & 30 \\
		\bottomrule
	\end{tabular}
	\caption{HRP-2, s-$\mathcal{N}$IPM-HLSP: a~\ref{eq:NL-HLSP} with $p=5$ and $n=38$ for the humanoid robot HRP-2. J. lim.: Joint limits, LF: left foot, RF: right foot, LH: left hand.}
	\label{tab:HRP-2}
\end{table}

\begin{figure}[htp!]
	\includegraphics[width=0.5\columnwidth]{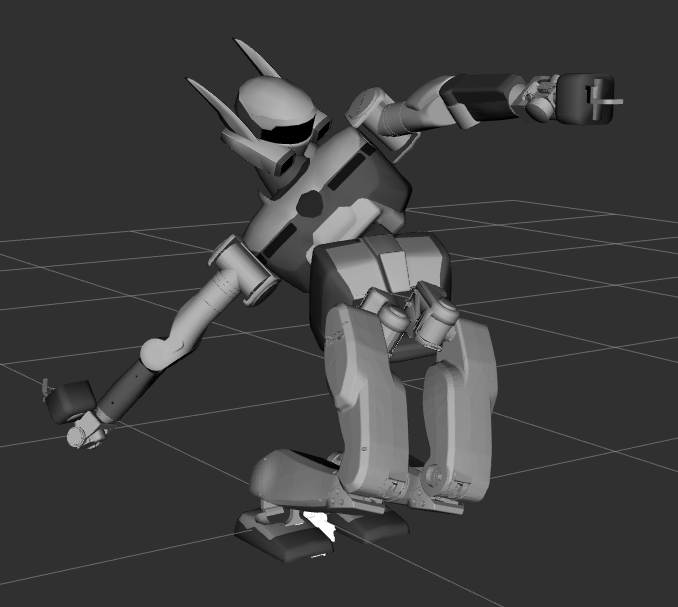}
	\centering
	\caption{HRP-2, Newton's method, s-$\mathcal{N}$IPM-HLSP: converged posture of HRP-2 reaching for a target far below its feet.}
	\label{fig:HRP-2_stretch}
\end{figure}

\begin{figure}[htp!]
	\includegraphics[width=0.8\columnwidth]{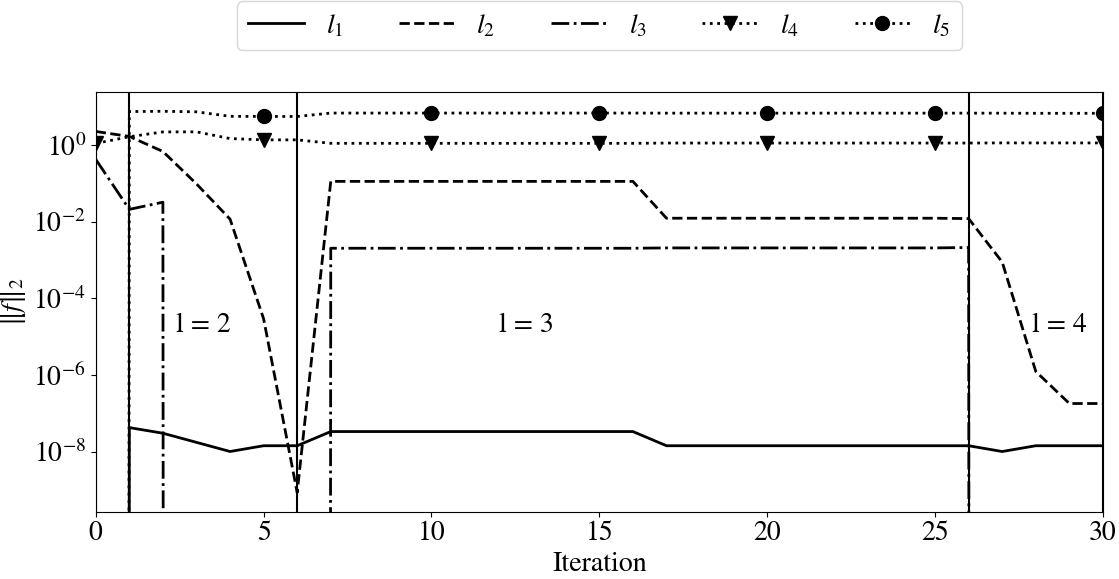}
	\centering
	\caption{HRP-2, Newton's method, s-$\mathcal{N}$IPM-HLSP: task error over S-HLSP iteration for Newton's method.}
	\label{fig:HRP-2e}
\end{figure}

\begin{figure}[htp!]
	\includegraphics[width=0.8\columnwidth]{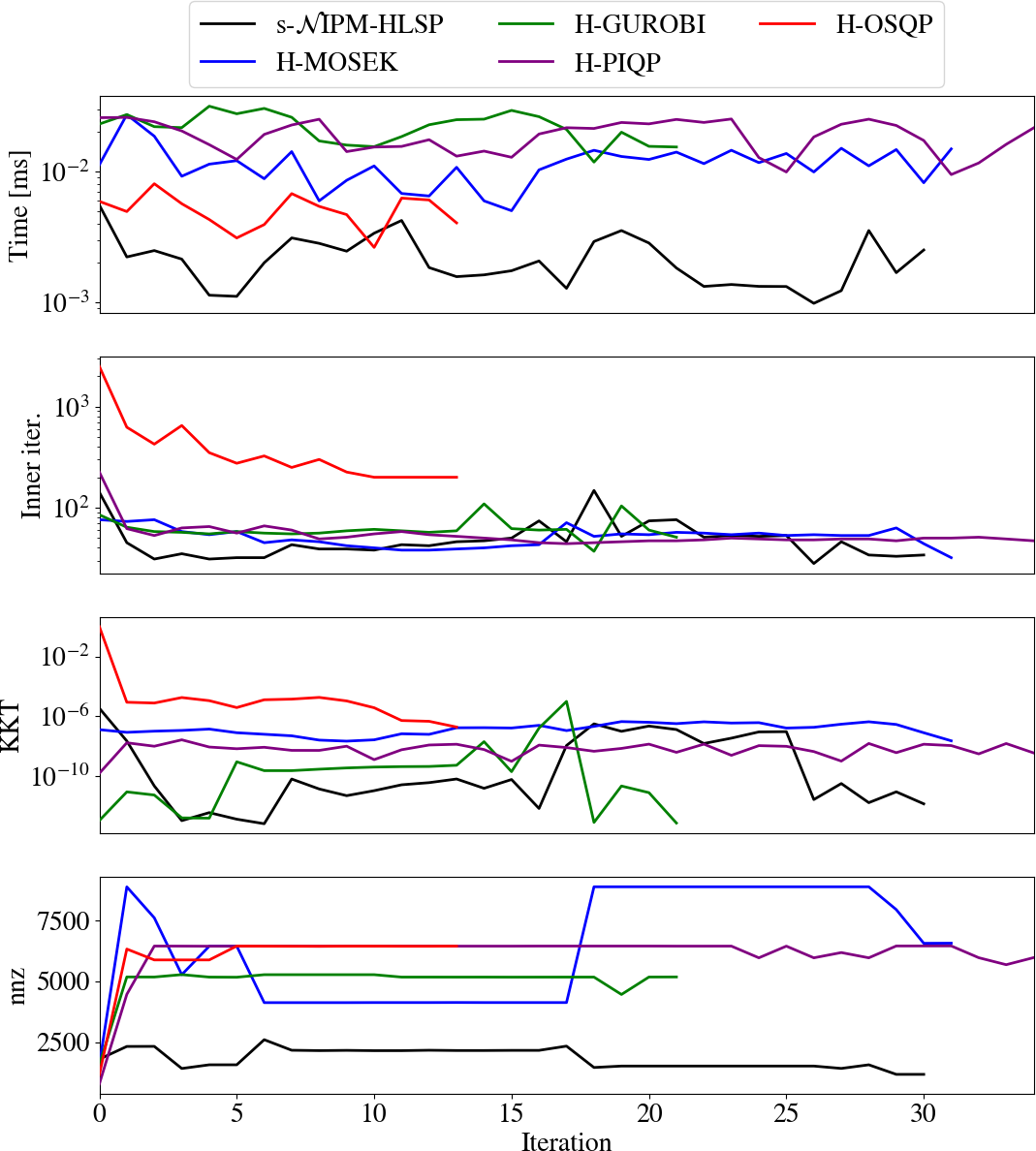}
	\centering
	\caption{HRP-2, Newton's method: solver times, number of inner iterations, KKT residuals and overall number of non-zeros handled throughout the whole hierarchy  over S-HLSP outer iteration for the different HLSP sub-solvers.}
	\label{fig:HRP2solverdata}
\end{figure}

In this example, we solve an inverse kinematics problem for the humanoid robot HRP-2 with $n=38$ degrees of freedom. We set $x_0=0$. The kinematics of the robot are described by series of trigonometric functions except for the base of the robot (corresponding to the robot's lower torso), which enables linear translations and rotations. The robot hierarchy with $p=5$ is given in Tab.~\ref{tab:HRP-2}.
The first level contains the joints' lower and upper limits. The second level prescribes the contact positions of the left and right feet and the left hand. The third level restricts the CoM position to a bounding box. The fifth level lets the right hand of the robot reach for an out-of-reach target $\BIN -0.5 & -0.5 & -1\BOUT$~m below its feet. For reference, the right foot is at $\BIN 0.015 &  -0.1 & 0.1\BOUT$~m with the $z$ component approximately at ground level $0.1$~m. On the last level all variables are regularized to zero.
This problem is typically dense with some sparse structure resulting from kinematic chains or bound constraints.

The converged posture of the robot is depicted in Fig.~\ref{fig:HRP-2_stretch}. The problem is solved in 30 outer iterations and 1589 inner iterations in 0.11~s (30 outer iterations, 983 inner iterations, 0.07~s for BFGS). 
A high accuracy solution is obtained for Newton's method while the results for BFGS are slightly worse (right hand converges to $\Vert v_4^* \Vert_2 = 1.41$~m for BFGS and $\Vert v_4^* \Vert_2 = 1.13$~m for Newton's method).

The task error over the S-HSLP iterations is depicted in Fig.~\ref{fig:HRP-2e}. It can be observed that the task error of level 2 increases during the resolution of level 3. This is due to the relaxed step threshold of $\chi = 1\cdot10^{-3}$. However, convergence with high accuracy is achieved eventually after the resolution of level 4.

It is apparent that the resulting posture in Fig.~\ref{fig:HRP-2_stretch} is not optimal as the left arm could be bent further downwards. This is due to the Hessian augmentation of the left hand task which prevents the resolution of the lower priority right arm stretch task in its nullspace of the linearized constraint. A similar issue can be observed for the right wrist joint. For this problem we chose the augmentation threshold $\epsilon = 10^{-9}$. 

Our proposed sparse HLSP solver s-$\mathcal{N}$IPM-HLSP solves the HLSP sub-problems the fastest as can be seen from Fig.~\ref{fig:HRP2solverdata}. All IPM based solvers require approximately the same number of inner iterations while delivering high accuracy solutions with low primal and dual residuals. However, the number of non-zeros is about 50\% lower for the reduced Hessian formulation (s-$\mathcal{N}$IPM-HLSP), which explains the low computation times of our solver. H-OSQP converges after fewer outer iterations, but to a less optimal solution ($\Vert v_4^* \Vert_2 = 1.73$~m).

\subsubsection{Prioritized discrete non-linear optimal control}
\label{eval:trajopt}

\begin{table}[htp!]
	\centering
	\setlength{\tabcolsep}{1pt}
	\begin{tabular}{@{} ccccc @{}}  
		\toprule
		$l$ & $f_l(x) \leqq v_l$& $\Vert v_l^* \Vert_2$ & Iteration & $\rho_{\max,l}$ \\
		\midrule
		& with $t = 0,\dots,T-1$\\
		1 & 
		$\BIN -c_{t+1} + c_{\min} \\
		\vert F_{x,y,{t+1}}^i \vert -  F_{x,y,{t+1}\max}\\
		-F_{z,{t+1}}^i \\
		F_{z,{t+1}}^i - F_{z,{t+1},\max}
		\BOUT
		\leq v_{1,t+1}$ & $2.6\cdot 10^{-8}$ & 1 & 5\\ 
		& $i = 1,\dots,4$ \\
		2 & $\Vert c_{t+1} - r_{i,{t+1}}\Vert^2_2 - L^2 \leq  v_{2,{t+1}}$ & 0 & 4  & 10 \\
		3 & 
		$\BIN c_{t+1} - c_t - \dot{c}_t\Delta t\\
		\dot{c}_{t+1} - \dot{c}_t - \sum_{i=1}^{4}\delta_t^i F_{t}^i\Delta t/m - g\Delta t\\
		k_{t+1} - k_t - \sum_{i=1}^{4} \delta_t^i(r_t^i - c_t)\times F_t^j\BOUT= v_{3,{t+1}}$  & $2.9\cdot10^{-13}$ & 8  & 10 \\
		4 & $c_{t+1} - c_{d,{t+1}} = v_{4,{t+1}}$ & 0.305 & 0  & 10 \\
		5 & $I\omega_{t+1}^2 = v_{5,{t+1}}$ & 0.29 & 1 & 10\\
		6 & $\BIN c^T_{t+1} & \dot{c}^T_{t+1} & \omega_{t+1}^T & F_{t+1}^T \BOUT^T = v_{6,{t+1}}$ & 149.715 & 0 & 10\\
		\midrule
		$\Sigma$ & & & 14\\
		\bottomrule
	\end{tabular}
	\caption{Solo12, Newton's method, s-$\mathcal{N}$IPM-HLSP: a~\ref{eq:NL-HLSP} with $p=6$ and $n=525$ for the robot dog Solo12.}
	\label{tab:solo12}
\end{table}

\begin{figure}[htp!]
	\includegraphics[width=1\columnwidth]{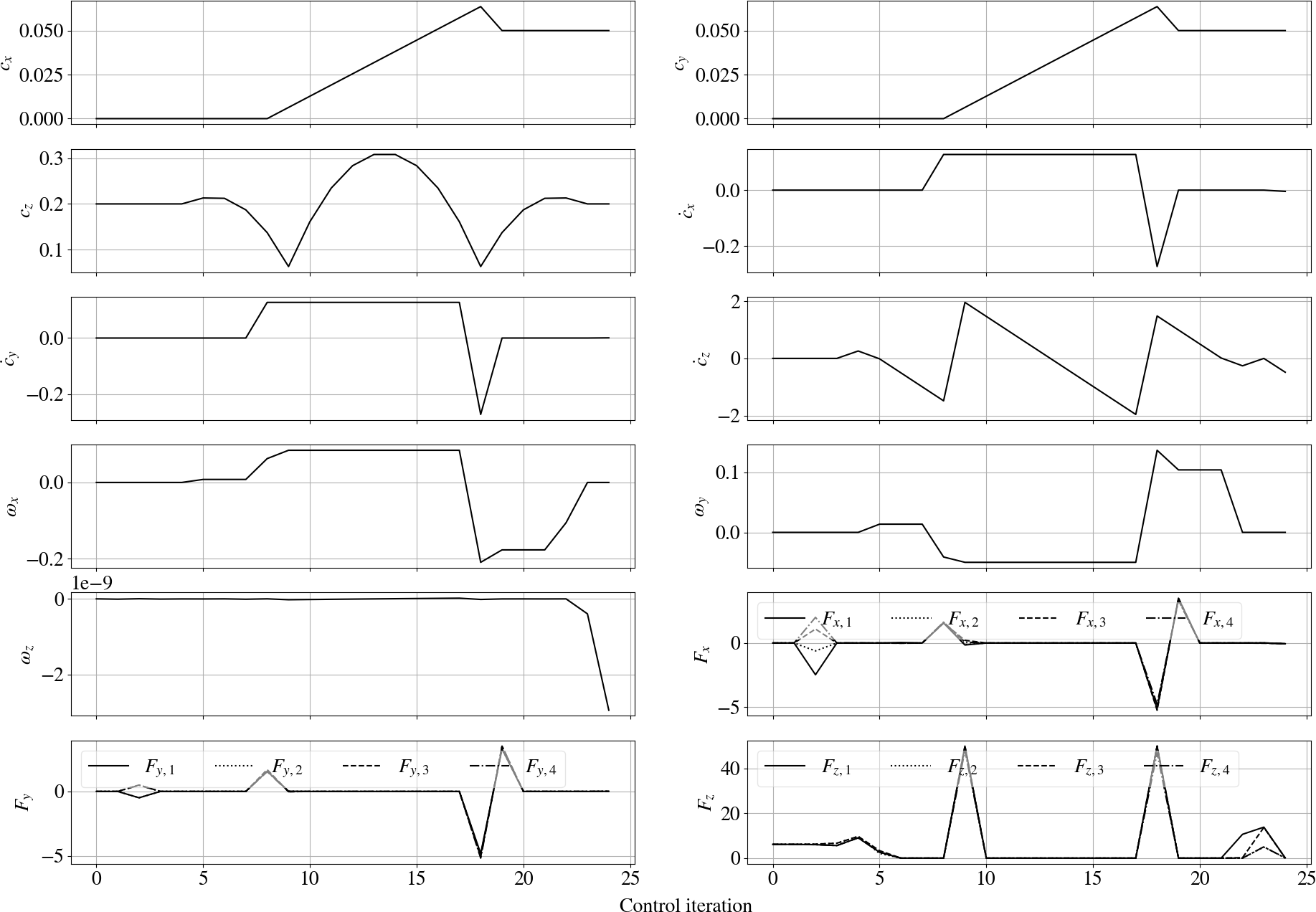}
	\centering
	\caption{Solo12, Newton's method, s-$\mathcal{N}$IPM-HLSP: CoM $c$, CoM linear and angular velocity $\dot{c}, \omega$ and contact forces $F_x$, $F_y$ and $F_z$ for the four end-effectors.}
	\label{fig:solo12rwf}
\end{figure}

\begin{figure}[htp!]
	\includegraphics[width=0.8\columnwidth]{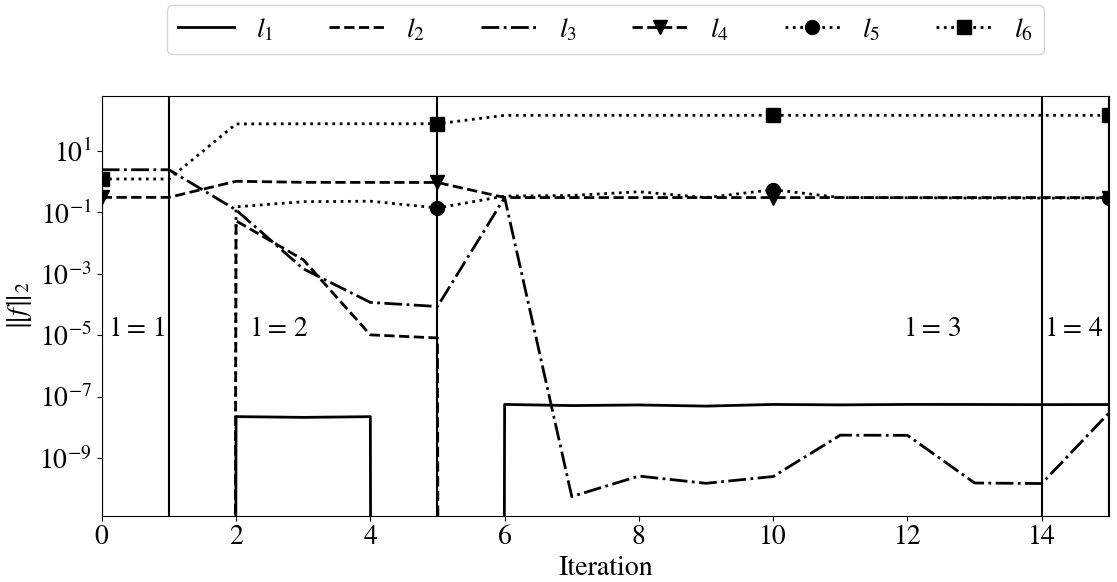}
	\centering
	\caption{Solo12, Newton's method, s-$\mathcal{N}$IPM-HLSP: reduction of task error for each level over S-HLSP iteration.}
	\label{fig:solo12e}
\end{figure}

\begin{figure}[htp!]
	\includegraphics[width=0.8\columnwidth]{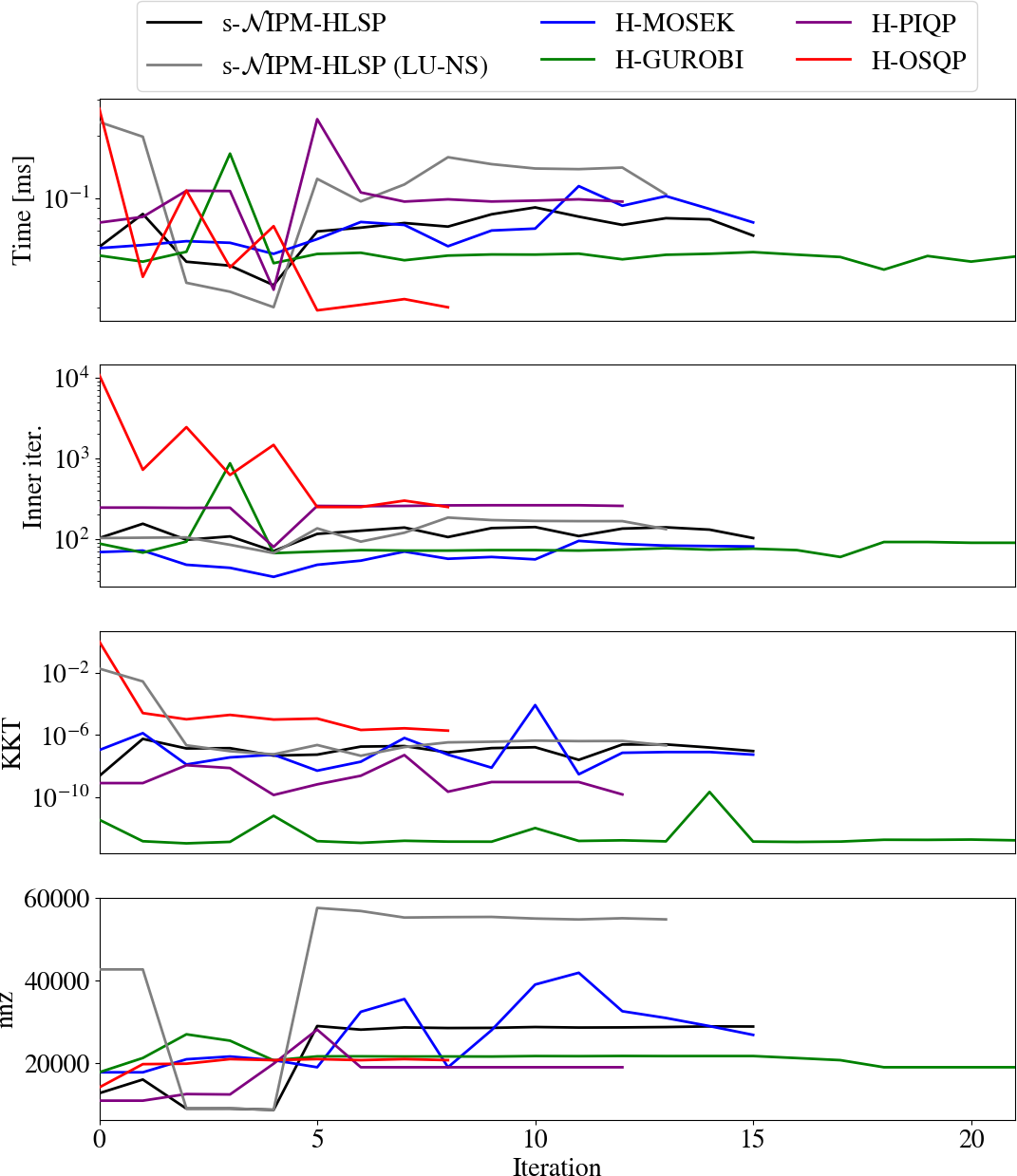}
	\centering
	\caption{Solo12, Newton's method: computation times, number of inner iterations, KKT residuals and overall number of non-zeros handled throughout the whole hierarchy  over S-HLSP outer iteration for the different HLSP sub-solvers.}
	\label{fig:solo12solverdata}
\end{figure}

\begin{figure}[htp!]
	\includegraphics[width=0.8\columnwidth]{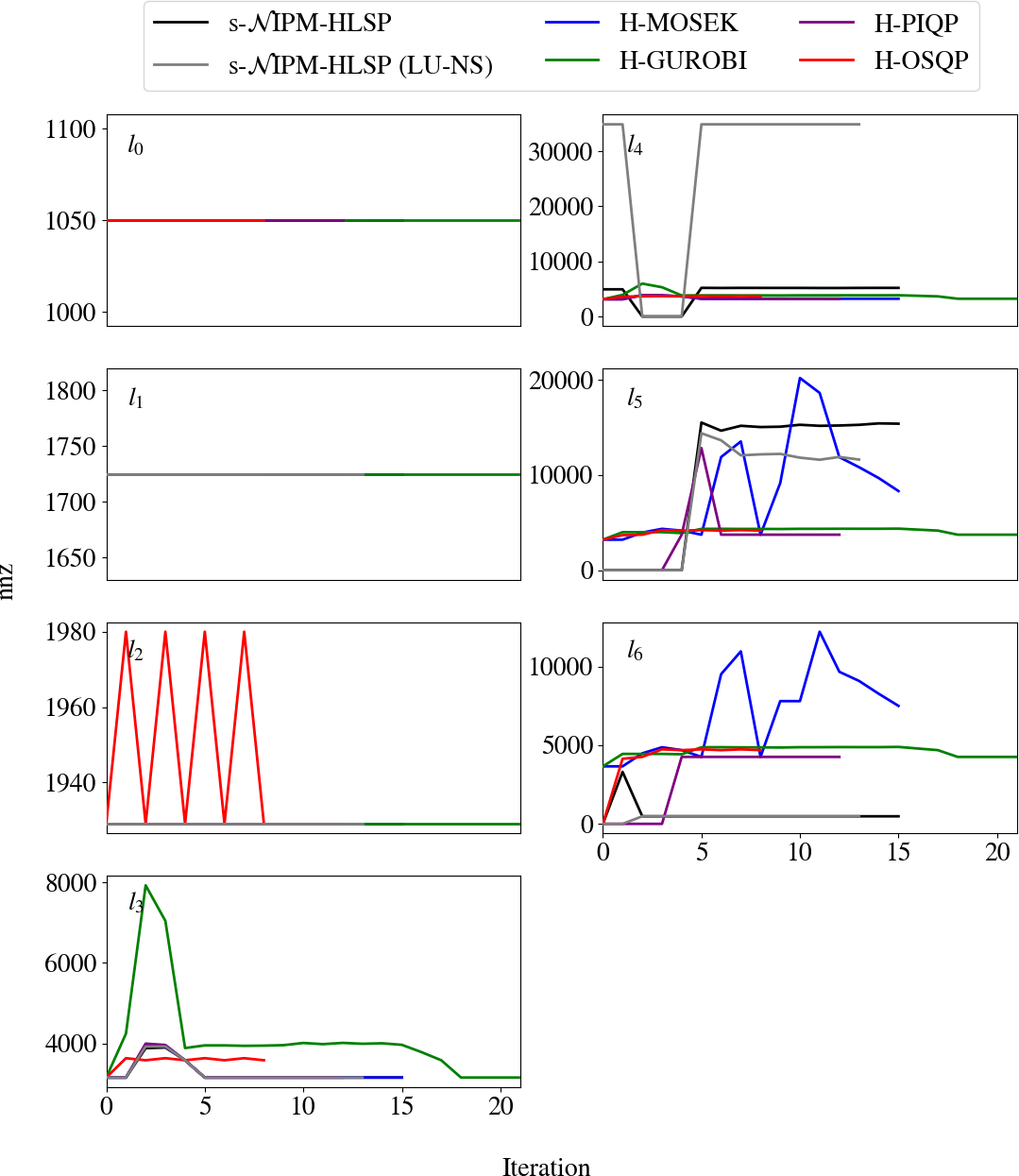}
	\centering
	\caption{Solo12, Newton's method: Number of non-zeros per priority level $l$.}
	\label{fig:solo12nnz}
\end{figure}

In this example, we solve a prioritized non-linear optimal control problem discretized by the explicit Euler method over a horizon of $T=25$ with $\Delta t = 0.05$~s. Specifically, we want to compute a CoM trajectory of the robot dog Solo12 ($m=2.5$~kg, moment of inertia $I=\diag(0.03,0.051,0.067)$~kg/m$^2$) 	corresponding to a diagonal forward leap. 
The dynamics of the robot are described by the centroidal dynamics at the CoM, see~\citep{meduri2022} and level 3 of the control hierarchy in Tab.~\ref{tab:solo12}. The robot has four legs, which can exert forces on the ground in $x$, $y$ and $z$ direction up to a given threshold $F_{\max}$. Thereby, only positive forces in normal / $z$ direction to the ground are allowed (only pushing, no pulling). Consequently, we have 12 control variables per time step $n_u=12$. Together with the CoM position $c$ and linear and angular velocities $\dot{c}$ and $\omega$ ($n_x=9$), this amounts to $n=T(n_x + n_u) = 525$ variables. The robot contacts are indicated in the centroidal dynamics by the binary operator $\delta_t^i$ (1 for contact at time $t$ of foot $i$, otherwise 0). 
The four feet are in contact with the ground until control iteration 10 at $r_1 = \BIN 0.2 & 0.142 & 0.015\BOUT$~m, $r_2 = \BIN 0.2 & -0.142 & 0.015\BOUT$~m, $r_3 = \BIN -0.2 & 0.142 & 0.015\BOUT$~m, $r_4 = \BIN -0.2 & -0.142 & 0.015\BOUT$~m, and then from control iteration 18 to 25 at $r_1 = \BIN 0.25 & 0.182 & 0.015\BOUT$~m, $r_2 = \BIN 0.25 & -0.102 & 0.015\BOUT$~m, $r_3 = \BIN -0.15 & 0.182 & 0.015\BOUT$~m, $r_4 = \BIN -0.15 & -0.102 & 0.015\BOUT$~m. Between control iterations 10 and 18 the feet are prescribed to not be in contact with the ground. 
The desired CoM position is set to $c_{d} = \BIN 0 & 0 & 0.2\BOUT$~m until control iteration 12 and $c_{d} = \BIN 0.05 & 0.05 & 0.2\BOUT$~m afterwards ($l=4$). On level 5, we wish to minimize the rotational energy $\omega^2$ of the robot. On the last level, all states and controls are regularized to zero. We set $x_0$ to zero except for the CoM $c$ which is set to the desired one $c_d$. The problem is in the form of discrete optimal control problems~\eqref{eq:oc} with their typical block-diagonal structure~\eqref{eq:blockA}, which can be leveraged by the turnback algorithm.

A robot trajectory is identified in 14 iterations by Newton's method and s-$\mathcal{N}$IPM-HLSP. The overall computation time for the~\ref{eq:hlsp} sub-problems is 1.01~s with 1475 inner iterations. The step threshold is chosen as $\chi=0.1$. We use block-diagonal finite differences to compute the hierarchical Hessians. The CoM trajectory is given in the top three graphs of Fig~\ref{fig:solo12rwf}. As expected, the robot's CoM is moved up and diagonally forward. $c_x$ and $c_y$ increase linearly after an impulsive force $F_x$ and $F_y$ in $x$ and $y$ direction at control iteration 8. To account for the contact switch, the robot's CoM is moved upwards (up to $c_z=0.3$~m) after a force impulse of 50~N (within the limit $F_{z,\max} = 50$~N; $F_{x,y,\max} = 20$~N). The effect of the angular velocity regularization is clearly recognizable ($l=5$), with zero angular velocities $\omega_x$, $\omega_y$ and $\omega_z$ at the end of the movement. Similarly, the contact forces are zero whenever they are not necessary due to the regularization task on the last level. For example, the $F_z$ force components are zero during the jump from control iteration 10 to 17.

Figure~\ref{fig:solo12solverdata} depicts the computation times of the different HLSP solvers. The computation time for H-MOSEK is 1.19~s with 1040 inner iterations with a solution similar to the one of s-$\mathcal{N}$IPM-HLSP (1.01~s with 1475 inner iterations). While there is a slight edge in computational efficiency for s-$\mathcal{N}$IPM-HLSP, we believe that our solver can benefit from further computational improvements, especially with regards to the computation of the nullspace basis via turnback algorithm (for example sparse LU column updates). 

The difference in the number of non-zeros of the reduced normal form is not as pronounced as in the previous examples. As can be seen in Fig.~\ref{fig:solo12nnz}, the number of non-zeros on the forth level is higher for s-$\mathcal{N}$IPM-HLSP (5200 non-zeros) than for H-MOSEK (3200 non-zeros). This can be explained by the fact that a high number of sparse bound constraints (trust-region and force limits) are projected into the nullspace of the robot dynamics of level 3 (which consists of dense(r) blocks). Nonetheless, compared with the number of non-zeros of s-$\mathcal{N}$IPM-HLSP (LU-NS) based on the dense nullspace basis~\eqref{eq:luns}, the band preserving quality of the turnback algorithm can be clearly recognized on level 4. Additionally, the effect of reduced Hessian methods in hierarchical programming becomes more apparent with higher number of priority levels $p$. On level 6, the number of non-zeros for s-$\mathcal{N}$IPM-HLSP is 500 while H-MOSEK needs to handle above 4000 non-zeros in most iterations.

The solutions for H-PIQP, H-GUROBI and H-OSQP are less optimal. For example, H-GUROBI resolves level 4 only to error $\Vert v_4^*\Vert_2 = 0.95$ as compared to $\Vert v_4^*\Vert_2 = 0.31$ for s-$\mathcal{N}$IPM-HLSP and H-MOSEK. Note that S-HLSP is not a global optimizer and convergence depends on the quality of the resolution of the HLSP sub-problems (a global solution to the HLSP sub-problem needs to be identified, see~\cite{fletcher2002b}). Also, as can be seen from the oscillating number of non-zeros on level 2 of H-OSQP (see Fig.~\ref{fig:solo12nnz}), tuning of the Hessian augmentation threshold can turn out difficult and motivates the design of more advanced switching methods between the Gauss-Newton algorithm and Newton's method.

\section{Conclusion}
With this work we have conceptualized, implemented and demonstrated a sequential hierarchical least-squares programming solver with trust region and hierarchical step-filter for non-linear hierarchical least-squares programming. The sparsity of prioritized discrete optimal control is thereby efficiently exploited by virtue of an efficient implementation of the turnback algorithm. The solver is characterized by global convergence to a local KKT point, which was confirmed in a set of evaluations.

In future work we would like to bring the solver to higher levels of maturity, for example by designing a rank revealing supernodal LU decomposition~\citep{demmel1995} with column updating capabilities.
We also see the necessity to develop test benches tailored to~\ref{eq:NL-HLSP} for more comprehensive testing, similarly to the ones for NLP like CUTEr~\citep{Bongartz1995}. 

\section*{Funding}
This work is partly supported by the Schaeffler Hub for Advanced Research at Nanyang Technological University, under the ASTAR IAF-ICP Programme ICP1900093.
This work is partly supported by the Research Project I.AM. through the European Union H2020 program (GA 871899).

\bibliographystyle{tfcad}
\bibliography{bib}% common bib file
%% if required, the content of .bbl file can be included here once bbl is generated
%%\input sn-article.bbl

%\printbibliography

%% Default %%

\listoffigures

\end{document}